\def\rr{{\mathbb R}}
\def\nn{{\mathbb N}}
\def\az{\alpha}
\def\dz{\delta}
\def\Dz{\Delta}
\def\tz{\theta}
\def\vtz{\vartheta}
\def\gz{\gamma}
\def\Gz{\Gamma}
\def\A{\mathbf{A}}
\def\aa{\mathfrak{A}}
\def\B{\mathbf{E}}
\def\G{\mathbb G}
\def\H{\mathbb H}
\def\I{{\mathbb{I}}}
\def\N{\mathbb N}
\def\PP{\mathbb{P}}
\def\R{\mathbb{R}}
\def\RR{\mathrm{R}}
\def\T{{\mathrm{T}}}
\def\UBB{\mathbb U}
\def\X{{\mathrm{X}}}
\def\Y{{\mathrm{Y}}}
\def\ep{\epsilon}
\def\mrr{\mathrm{R}}
\def\goodf{C_c^\infty(\G)}
\DeclareMathOperator\dif{d\!}
\def\pik{{\bf PI}}
\def\transpose{\mathsf{T}}
\def\ss{\mathcal{S}}
\def\ntt{N_{3,2}}
\def\m{\mathfrak{m}}
\def\lsim{\lesssim}
\def\gsim{\gtrsim}
\def\jac{\mathrm{Jac}\,}
\def\vol{\mathrm{vol}}
\def\U{\mathbb{U}}
\newtheorem{theo}{Theorem}[section]
\newtheorem{lem}[theo]{Lemma}
\newtheorem{prop}[theo]{Proposition}
\newtheorem{ass}[theo]{Assumption}
\title{On gradient estimates of the heat semigroups on step-two Carnot groups}
\author{
Sheng-Chen Mao, Ye Zhang}
\date{}
\begin{document}
	
	\renewcommand{\theequation}{\thesection.\arabic{equation}}
	\setcounter{equation}{0} \maketitle
	
	\vspace{-1.0cm}
	
	\bigskip
	
	
	{\bf Abstract.} In this work, we give a sufficient condition for a step-two Carnot group to satisfy the quasi Bakry--\'Emery curvature condition. As an application, we establish the gradient estimate for the heat semigroup on the free step-two Carnot group with three generators $N_{3,2}$. Moreover, higher order gradient estimates and Riemannian counterparts are also deduced under an extra condition.

	\medskip
	
	{\bf Mathematics Subject Classification (2020):} {\bf 58J35; 22E30; 35R03 }
	
	\medskip
	
	{\bf Key words and phrases: Free step-two Carnot group with three generators; Gradient estimate;  Measure contraction property;  Quasi Bakry--\'Emery curvature condition; Step-two Carnot group}
	
	\medskip
	
	
	
\section{Introduction}
\setcounter{equation}{0}

In the groundbreaking works of Sturm \cite{S060,S06} and Lott--Villani \cite{LV09}, using optimal transport, the notion of curvature-dimension condition is generalized to non-smooth metric measure spaces. Furthermore, it is stable under measured Gromov--Hausdorff convergence and implies almost all the geometric and functional analytic estimates from Riemannian geometry, even in the non-smooth setting. 

However, this condition fails to hold on sub-Riemannian manifolds. The first counterexample is given by Juillet \cite{J09} on the Heisenberg group, one of the simplest examples of sub-Riemannian manifolds. Furthermore, it is shown recently in \cite{J21} that sub-Riemannian manifolds cannot admit any curvature-dimension condition. See also \cite{RS23, MR23}.

Consequently, several weak notions of the curvature-dimension condition are introduced to this sub-Riemannian setting, including quasi Bakry--\'Emery curvature condition \cite{Li06, BBBC08, E10, HL10, Z21, Z23}, measure contraction property \cite{J09, R13, R16, BR18, BR20, GZ24}, sub-Riemannian interpolation inequalities \cite{BKS18, BKS19, BR19, BMR24}, other generalized curvature-dimension condition \cite{GT16, GT162, BG17, M21, Q15}.

In this paper we focus on quasi Bakry--\'Emery curvature condition in the setting of step-two Carnot groups.

\subsection{Quasi Bakry--\'Emery curvature condition}

Let $\G$ be a step-two Carnot group. We use $\nabla$ and $\Delta$ to denote the horizontal gradient and (canonical) sub-Laplacian respectively (see Section \ref{s2} for the precise definitions). The quasi Bakry--\'Emery curvature condition is the following inequality for the heat semigroup $(e^{h \Delta})_{h > 0}$: 
\begin{equation} \label{hsge}
\left|\nabla e^{h \Delta} f\right|(g) \leq C \, e^{h \Delta}(|\nabla f|)(g), \quad \forall \, g \in\G,  h>0, f \in C_c^{\infty}(\G)
\end{equation}
for some constant $C > 1$. 

We remark that the constant $C$ cannot be equal to or smaller than $1$ since otherwise it would imply the curvature-dimension condition $\mathrm{CD}(0,\infty)$. Due to this fact, it is called quasi Bakry--\'Emery curvature condition in \cite[Definition 2.15]{A20}. 

By H\"older's inequality, \eqref{hsge} implies the following weak version:
\begin{align} \label{grap}
|\nabla e^{h \Delta} f|^p(g) \le C_p \, e^{h \Delta} (|\nabla f|^p)(g),\quad \forall \, g \in \G, h > 0, f \in C_c^\infty(\G) 
\end{align}
for $p > 1$. 

The weak inequality \eqref{grap} is established in \cite{DM05} for the first Heisenberg group $\H^1$ by using the  Malliavin Calculus, and later in \cite{M08, GT19} for general Carnot groups. 

For the strong inequality \eqref{hsge}, it is first established in the paper \cite{Li06}. Then more proofs are given in \cite{BBBC08}.  Generalizations to the H-type groups and nonisotropic Heisenberg groups can be found in \cite{E10, HL10, Z21}.  It can also be obtained via the method of $U$-bounds in \cite{HZ10, IKZ11}.

However, all the proofs in the above papers depend heavily on the precise upper and lower bound estimates of the heat kernel and upper bound of the gradient of the heat kernel, which are usually derived from the uniform heat kernel asymptotics thanks to the explicit formula of the heat kernel given by oscillatory integral (see for example \cite{Li07, E09, Li10, LZ19}), as well as on the computation of the Jacobian determinant of the sub-Riemannian exponential map. Despite some attempts such as the one in \cite{LZ212}, owing to the complexity of the geometry of the underlying group, it seems still challenging to deduce the precise upper and lower bound estimates of the heat kernel in general groups. Furthermore, recently the uniform heat kernel asymptotic, as well as the precise upper and lower bound estimates for the heat kernel, is established on free step-two Carnot group with three generators $N_{3,2}$ in \cite{LMZ23}, but it appears that the expression of the bound is too complicated to apply. As for the computation of the sub-Riemannian exponential map, it is complicated even on the free step-two Carnot group with three generators $N_{3,2}$ and should be more difficult in general groups since the key information such as the cut times, cut loci are still unknown. 

Recently some stability properties of \eqref{hsge} under tensorization and projection are studied in \cite{Z23, BGS24} to explore more groups supporting \eqref{hsge}. See also \cite{GL22} for basic ideas of this approach.

In the present paper, we provide a sufficient condition for \eqref{hsge} on a general step-two Carnot group to overcome these problems, whose handiness is illustrated by several concrete examples in Section \ref{s4}. In other words, by abstracting the main ingredient of the proof given in \cite{Li06, HL10}, we find that the key difficulty is an inequality which is closely related to the measure contraction property w.r.t. a special measure. In this way, one can avoid using the precise heat kernel bounds and computing the Jacobian determinant if such inequality is verified.  Although in our examples we still have to fall back on the precise heat kernel bounds, one can actually get rid of the computation of the Jacobian determinant of the sub-Riemannian exponential map via the small time asymptotics of the heat kernel (see Subsection \ref{s42}) and we believe that this argument will shed new light on the proof of this inequality. See the examples in Section \ref{s4} for more details.
 
All the pertinent assumptions are listed in the following subsection.

\subsection{Assumptions and measure contraction property}

In the rest of this paper we use $(p_h)_{h > 0}$ to denote the convolution kernel of the heat semigroup $(e^{h \Delta})_{h > 0}$, $d$ the Carnot--Carath\'eodory distance (sub-Riemannian distance), $o$ the identity element, and $\vol$ the Haar measure of $\G$. Furthermore, for  $g\in\G$, $E \subset \G$ and $s \in [0,1]$, we define the set of $s$-intermediate points as
\begin{equation}\label{sinter}
Z_s(g,E) := \left\{ g' \in \G; \,  \exists \, g'' \in E \ s.t. \ d(g,g') = s\, d(g,g''), d(g'',g') = (1-s) \, d(g,g'') \right\}.
\end{equation}

Let $d(g) := d(g,o)$, $p := p_{1}$, $\dif \mu := e^{\frac{d^2}{4}} p \, \dif \vol$ and
\[
\mathscr{B}:=\{E\subset\mathbb{G}; E \,\,\mbox{is measurable, bounded and}\,\, 0<\mathrm{vol}(E)<\infty\}.
\]
Now we introduce the assumption imposed on the group $\G$:
\begin{ass}\label{ass}
Let $\G$ be a step-two Carnot group. There exist constants $N > 0$ and $C > 0$ such that
\begin{equation} \label{assmcp}
\mu(Z_s(o,E)) \ge C^{-1} s^{N} \mu(E), \quad \forall \, s \in [0,1],\, E \in\mathscr{B}. 
\end{equation}
\end{ass}

We remark that \eqref{assmcp} can be weakened to hold only on $[\frac12,1]$ as shown in Subsubsection \ref{coresec}, and it indeed can be verified for concrete examples cited in Section \ref{s4}. Moreover, the inequality \eqref{assmcp} is actually an assumption of measure contraction property type.

Recall that the measure contraction property is introduced in \cite{S06} or \cite{O07}. In the setting of step-two Carnot groups, the measure contraction property $\mathrm{MCP}(0,N)$ can be written as 
\begin{align}\label{defmcp}
\vol(Z_s(g,E)) \ge  s^{N} \vol(E), \quad \forall \, g \in \G, s \in [0,1], E \in\mathscr{B}.
\end{align}
Here we have restricted the set $E$ to being bounded in \eqref{defmcp}, which is equivalent to the case without that restriction as in \cite[Definition 2.1]{O07} through a limiting argument. From the left invariance, it suffices to consider \eqref{defmcp} for $g = o$. As a result, our assumption \eqref{assmcp} essentially differs from \eqref{defmcp} up to a constant $C^{-1}$ and a change of measure. However, since $\mu$ is not left invariant,  if we rewrite assumption \eqref{assmcp} into the general case, then it should be 
\[
\mu_g(Z_s(g,E)) \ge C^{-1} s^{N} \mu_g(E), \quad \forall \, g \in \G, s \in [0,1],\, E \in\mathscr{B},
\]
where $\dif \mu_g := p(g,\cdot) e^{\frac{d(g,\cdot)^2}{4}} \dif \vol$ with $p(g,g') := p(g^{-1} \cdot g')$. Thus there is some slight difference comparing to the usual change of measure for measure contraction property. Recall also that in the paper of Juillet \cite{J09}, the measure contraction property $\mathrm{MCP}(0,2n + 3)$ is established on Heisenberg groups $\H^n$ and thus the measure contraction property becomes one of the weak notions of the curvature-dimension condition on sub-Riemannian manifolds. Actually this property holds on general step-two Carnot group.
\begin{theo}[Theorem 1.4 of \cite{BR20}]\label{tmcp}
Given a step-two Carnot group $\G$, there exists a constant $N > 0$ such that the measure contraction property $\mathrm{MCP}(0,N)$ holds.
\end{theo}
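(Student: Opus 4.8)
Since Theorem \ref{tmcp} is a known result, the plan is to recall the argument of Barilari--Rizzi. First I would use the left invariance of $\vol$ and the fact that left translations are isometries of $(\G,d)$ to reduce \eqref{defmcp} to the case $g=o$. Next I would recall the structure of minimizing geodesics from $o$ in a step-two Carnot group: each of them is a normal geodesic, hence of the form $s\mapsto \exp_o(s\lambda)$, where $\exp_o$ is the sub-Riemannian exponential map based at $o$ (the time-one geodesic flow projected to $\G$) and $\lambda$ ranges over $T_o^*\G$; the cut locus $\mathrm{Cut}(o)$ is closed and $\vol$-negligible, and off it $\exp_o$ is a diffeomorphism onto $\G\setminus\mathrm{Cut}(o)$ with the minimizer to each point unique. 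Consequently $Z_s(o,E)\supseteq \Phi_s(E\setminus \mathrm{Cut}(o))$ with $\Phi_s:=\exp_o\circ(s\,\cdot)\circ\exp_o^{-1}$ a.e.\ defined and injective, so that by the area formula \eqref{defmcp} for $g=o$ follows from the a.e.\ pointwise Jacobian bound $\jac\Phi_s(g)\ge s^{N}$.

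To handle this Jacobian I would exploit the one-parameter group of dilations $(\delta_r)_{r>0}$, which are automorphisms of $\G$ and $d$-homotheties, and whose induced action $\delta_r^*$ on covectors satisfies $\exp_o\circ\delta_r^*=\delta_r\circ\exp_o$. Decomposing a covector as $\lambda=(\xi,\tau)$ along the first and second layers, one checks $\delta_r^*(\xi,\tau)=(r\xi,\tau)$, hence $\exp_o(s\lambda)=\delta_s\bigl(\exp_o(\xi,s\tau)\bigr)$. Feeding this identity into the chain rule and using that $\delta_s$ scales $\vol$ by $s^{Q}$ ($Q$ the homogeneous dimension), one arrives at an identity of the shape $\jac\Phi_s(g)=s^{Q+m}\,\mathcal N(\xi,s\tau)/\mathcal N(\xi,\tau)$, where $\mathcal N:=\jac\exp_o$, $g=\exp_o(\xi,\tau)$, and $m$ is the dimension of the second layer (note $\mathcal N$ is homogeneous of degree $2m$ in $\xi$, so this quotient depends only on $\xi/|\xi|$, $\tau$ and $s$). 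Thus the problem reduces to a \emph{uniform} comparison $\mathcal N(\xi,s\tau)\gtrsim s^{c}\,\mathcal N(\xi,\tau)$ along each geodesic ray, with an exponent $c\ge 0$ independent of the direction of $\lambda$ — i.e.\ a lower bound for the Jacobian of $\exp_o$ at an interior point of a minimizing segment in terms of its value at the far endpoint.

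Finally I would establish this comparison from the sub-Riemannian Jacobi (Riccati) equation: along a fixed geodesic, $\mathcal N$, written in a canonical moving frame, solves a matrix Sturm--Liouville equation whose potential is the canonical sub-Riemannian curvature operator. In a step-two Carnot group the growth vector is constant and this operator has an explicit block form determined by the bracket relations, so the solution can be compared, up to the first conjugate time, with the flat model, yielding \eqref{defmcp} with an explicit $N$ depending only on the two layer dimensions (for instance $N=2n+3$ on $\H^n$). I expect the main obstacle to be precisely the uniformity in the geodesic direction: one must keep the exponent $N$ independent of $(\xi,\tau)$ even in the degenerate regime where a large second-layer momentum $\tau$ pushes the conjugate time toward $0$, and it is here that the explicit step-two structure — rather than a soft curvature-dimension estimate — is indispensable; one also has to check that the covectors producing abnormal or otherwise singular behaviour form a lower-dimensional set which does not affect the measure estimate. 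This is exactly the content of \cite[Theorem 1.4]{BR20}.
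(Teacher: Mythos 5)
The paper does not prove this theorem; it is stated as a black-box citation of Theorem~1.4 of [BR20], so there is no in-paper proof for your attempt to be compared against. The only closely related in-paper material is Proposition~3.3(i), the equivalence of $\mathrm{MCP}(0,N)$ with the Jacobian bound $\jac(\exp)(s\eta)\ge s^{N-q-m}\jac(\exp)(\eta)$ on $\mathcal D$ (attributed there to [NZ23]). Your reconstruction is consistent with that framework: the reduction to $g=o$ by left invariance, the inclusion $Z_s(o,E)\supseteq\Phi_s(E\setminus\mathrm{Cut}_o)$, the use of dilations to strip out the $\xi$-dependence, and the resulting factorization $\jac\Phi_s(g)=s^{Q+m}\,\mathcal N(\xi,s\tau)/\mathcal N(\xi,\tau)$ with $Q=q+2m$ are all correct. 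One terminological caveat: the map you label $\delta_r^*$, namely $(\xi,\tau)\mapsto(r\xi,\tau)$, is \emph{not} the cotangent pullback of $\delta_r$ (which would send $\tau$ to $r^2\tau$); it is nevertheless the correct intertwining map so that $\exp(r\xi,\tau)=\delta_r(\exp(\xi,\tau))$ in these covector coordinates, hence your computation is unaffected, but the notation is misleading.

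The substantive issue is that you stop precisely where the cited proof begins. The uniform comparison $\mathcal N(\xi,s\tau)\gtrsim s^{c}\,\mathcal N(\xi,\tau)$, with a single exponent $c$ valid for all $(\xi,\tau)\in\mathcal D$ and $s\in(0,1]$ — including near the boundary of $\mathcal D$, where the first conjugate time approaches $1$ — is the entire technical content of Theorem~1.4 of [BR20]. You correctly flag this as the main obstacle and gesture at a Jacobi/Riccati comparison, but you never write down the inequality to be proved nor indicate why the exponent can be taken uniform in the second-layer frequency. For a step-two group, $\jac(\exp)$ admits an explicit product formula in terms of the eigenvalues of $\sum_{j}\tau_j\,U^{(j)}$, and the uniform comparison is a concrete (if delicate) analysis of the resulting $\sin$-type factors; without carrying that out, your proposal is a faithful outline of the proof strategy rather than a proof.
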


By employing the sub-Riemannian exponential map, we can obtain the equivalent statement for such kind of inequality; see Proposition \ref{HCP} below.

For more related results, we refer to \cite{BR20, R13, R16, BR18, GZ24, BT23, BMRT24}. However, in view of the small time result of the heat kernel below (Lemma \ref{sth}), roughly speaking, the measure $\mu$ also affects the number $N$ and for this reason, our assumption \eqref{assmcp} cannot be derived directly from the measure contraction property $\mathrm{MCP}(0,N)$. See also the examples in Section \ref{s4}.

\medskip

For estimates of higher order derivatives, we need an extra assumption as follows.

\begin{ass}\label{ass2}
Let $\G$ be a step-two Carnot group. For every $k=2,3,\ldots$ there exists a constant $C_k > 0$ such that
\begin{align}\label{hka2}
|\nabla^k p(g)| &\le C_k \, (1 + d(g))^k \, p(g), \qquad \forall \, g \in \G.
\end{align}
\end{ass}

Here we recall that the inequality \eqref{hka2} for $k = 1$ is known on step-two Carnot groups. To be more precise, it follows from \cite[Proposition 2.21]{BG17} that step-two Carnot group satisfies the generalized curvature-dimension inequality $\mathrm{CD}(0,\rho,\kappa,q)$ in the sense of Baudoin--Garofalo with $\rho,\kappa > 0$. Then \cite[Theorem 3.1]{Q15} gives the following gradient estimate of the heat kernel. Note that although in \cite[Theorem 3.1]{Q15} it is assumed that the invariant measure is finite, it does not play any role in the proof so one can apply it to step-two Carnot groups.  See Appendix \ref{app} for more details.
\begin{lem} \label{geh}
	Let $\G$ be a step-two Carnot group. Then there is a constant $C>0$ such that
	\begin{equation}\label{assud}
		|\nabla p(g)| \le  C (1+d(g)) \, p(g), \qquad \forall \, g \in \G.
	\end{equation}
\end{lem}

Now we can formulate our main results of this work.

\subsection{Main results}

\begin{theo} \label{thm1}
Let $\G$ be a step-two Carnot group. If Assumption \ref{ass} holds, then the  quasi Bakry--\'Emery curvature condition \eqref{hsge} is valid for some constant $C > 1$ (not necessarily the same $C$ in Assumption \ref{ass}).
\end{theo}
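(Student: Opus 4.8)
The plan is to differentiate the heat semigroup in the horizontal directions and reduce the gradient estimate \eqref{hsge} to a pointwise comparison between the gradient of the heat kernel and the heat kernel itself, integrated against the right measure. Concretely, writing $e^{h\Delta}f(g) = \int_\G p_h(g^{-1}g') f(g')\,\dif\vol(g')$ and using left invariance together with the scaling property of the heat kernel, one reduces everything to the case $h = 1$, $g = o$; so it suffices to bound $|\nabla e^{\Delta}f|(o)$ by $C\, e^{\Delta}(|\nabla f|)(o)$. The left-hand side equals $\bigl|\int_\G \nabla p(g')\, f(g')\,\dif\vol(g')\bigl|$ after moving the derivative onto the kernel (here $\nabla$ acts in the appropriate variable; this is where one uses that $\G$ is a group so that the convolution structure lets the horizontal derivative fall on $p$). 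Integrating by parts along horizontal curves, the key is to represent $\nabla p(g')$ in a form that exhibits the factor $|\nabla f|$ after integration.

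The heart of the argument, following the strategy of \cite{Li06, HL10}, is to use the gradient bound \eqref{assud}, namely $|\nabla p(g)| \le C(1+d(g))\, p(g)$, to control $|\nabla e^{\Delta} f|(o)$ by $\int_\G (1+d(g'))\, p(g')\, |f(g')|\,\dif\vol(g')$ up to handling the placement of the derivative on $f$ versus on $p$. The standard trick is: instead of $|f(g')|$ one wants $|\nabla f|$ evaluated along a geodesic, and this is achieved by writing $f(g') - f(o) = \int_0^1 \langle \nabla f(\gamma(s)), \dot\gamma(s)\rangle\,\dif s$ along a minimizing geodesic $\gamma$ from $o$ to $g'$ (one may subtract the constant $f(o)$ freely since $\int \nabla p\, \dif \vol = 0$), so that $|f(g') - f(o)| \le d(o,g')\, \int_0^1 |\nabla f|(\gamma(s))\,\dif s$. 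Substituting and using Tonelli, the bound becomes a double integral $\int_\G \int_0^1 (1+d(g'))\, d(g')\, p(g')\, |\nabla f|(\gamma_{g'}(s))\,\dif s\,\dif\vol(g')$. For each fixed $s$, one performs the change of variables $g' \mapsto \gamma_{g'}(s)$, i.e.\ the $s$-intermediate point map based at $o$; the image is $Z_s(o,\cdot)$, and the crucial point is that Assumption \ref{ass} — the measure contraction inequality $\mu(Z_s(o,E)) \ge C^{-1} s^N \mu(E)$ for the measure $\dif\mu = e^{d^2/4} p\,\dif\vol$ — controls the Jacobian of this map \emph{weighted by exactly the factor $e^{d^2/4} p$ appearing in $p(g')$ via the on-diagonal-type bound}. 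In other words, $\mu$ is tailored so that the weight $(1+d)\,d\,p\,\dif\vol$ pushed forward under the intermediate point map is dominated, thanks to \eqref{assmcp} and the quadratic exponential in $\mu$ absorbing the polynomial factors $(1+d)d$ and the geometric loss $s^{-N}$, by a finite constant times $p\,\dif\vol$ at the target point. This is precisely the step that in earlier works required the explicit Jacobian determinant of the sub-Riemannian exponential map and the precise two-sided heat kernel bounds.

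Carrying this out yields $|\nabla e^{\Delta}f|(o) \le C \int_\G p(g')\, |\nabla f|(g')\,\dif\vol(g') = C\, e^{\Delta}(|\nabla f|)(o)$, which after rescaling gives \eqref{hsge} with the claimed constant $C > 1$; the fact that $C$ cannot be taken $\le 1$ was already noted in the introduction. Two technical points will need care. First, the change of variables along intermediate-point maps: away from the cut locus the map $g' \mapsto \gamma_{g'}(s)$ is a diffeomorphism and the pushforward inequality \eqref{assmcp} can be read back as a pointwise Jacobian bound $\mu$-a.e., but one must argue that the cut locus is $\vol$-null and that no mass is lost there — this is standard for step-two Carnot groups and can be cited. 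Second, and this is the main obstacle I anticipate, matching the \emph{exponential} weight: the measure $\mu$ carries $e^{d^2/4}$, whereas the naive pushforward of $p(g')\,\dif\vol(g')$ near a target point $g$ at intermediate parameter $s$ involves $d(o,g') \approx d(o,g)/s$, so $p(g') \approx e^{-d(o,g')^2/4} \approx e^{-d(o,g)^2/(4s^2)}$, which is \emph{much smaller} than $p(g) \approx e^{-d(o,g)^2/4}$; thus the inequality \eqref{assmcp}, read for $\mu$, gives a bound of the form (Jacobian) $\le C s^{-N} e^{d(o,g)^2/4 - d(o,g')^2/4}$ and one must verify that when this is multiplied by the polynomial prefactors $(1+d(g'))\,d(g')\, p(g')$ and integrated over the $s$-fibers, the result is uniformly $\lesssim p(g)$. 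The quadratic gap in the exponent is favorable (it \emph{helps}, making the fiber integral converge and killing the $s^{-N}$ singularity as $s\to 0$), but turning this heuristic into a clean estimate — controlling the behavior as $s \to 1$ where the map degenerates toward the identity, and uniformly in $g$ — is where the real work lies, and it is exactly the content that Assumption \ref{ass} is designed to encapsulate.
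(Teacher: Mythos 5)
Your high-level framework is on target: reduce to $h=1$, $g=o$, use the gradient bound $|\nabla p|\lesssim(1+d)p$, integrate along geodesics, change variables to the $s$-intermediate point, and let the measure-contraction assumption control the Jacobian. This captures the spirit of the paper's proof. But the one-shot argument you describe — a single Taylor integral from $o$ to $g'$ and a single change of variables for each $s\in(0,1)$ — does not close, and the gap is exactly opposite to where you anticipate it. You write that the quadratic gap in the exponent ``kills the $s^{-N}$ singularity as $s\to 0$'' and that the delicate regime is $s\to 1$. Running the estimate through: after the change of variables $g'\mapsto g_s$ with $d(g')=d(g_s)/s$, Proposition~\ref{HCP}(ii) applied to Assumption~\ref{ass} gives
\[
p(g')\,\jac(\Upsilon_s)(g')^{-1}\ \le\ C\,s^{-N}\,p(g_s)\,e^{-d(g_s)^2(1-s^2)/(4s^2)},
\]
so that the bound you need reduces to showing
\[
\int_0^1\Bigl(1+\tfrac{d}{s}\Bigr)\tfrac{d}{s}\;s^{-N}\,e^{-d^2(1-s^2)/(4s^2)}\,\dif s\ \lesssim\ 1
\quad\text{uniformly in } d=d(g_s)\ge 0.
\]
Substituting $u=d/s$, the contribution from $s\in(0,1/2)$ is comparable to $d^{1-N}\int_{2d}^{\infty}u^{N}e^{-cu^2}\,\dif u$, which behaves like $d^{1-N}$ as $d\to 0$. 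Since any step-two Carnot group satisfies $\mathrm{MCP}(0,N)$ only for $N$ at least the topological dimension $q+m>1$, this blows up. In other words, the exponential \emph{does} crush the $s^{-N}$ singularity when $d(g_s)$ is large, but it does nothing when the intermediate point $g_s$ is close to $o$, and your integral sweeps over exactly those points. The $s\to 1$ end, by contrast, is harmless.

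The paper's proof avoids this by two ideas that are missing from your proposal. First, it splits the integral into a near region $B(o,\A)$ (your ``$J_1$'') handled directly by the $1$-Poincar\'e inequality and the two-sided boundedness of $p$ on compacts, with no geodesic decomposition at all. Second, and crucially, in the far region the geodesic from $g$ is \emph{not} followed all the way to $o$: it is followed only to the point $g(N(g))$ at distance roughly $(1-\A^{-1})d(g)$, chopped into $O(d(g)^3)$ pieces of length $\sim d(g)^{-2}$, with the representation formula (Lemma~\ref{lem1}) used on each small ball. This means the intermediate-point maps $\Upsilon_i$ only ever use $s=1-i|\zeta|^{-3}\in[\tfrac12,1]$, so the problematic small-$s$ regime never appears — which is also why the paper can remark that Assumption~\ref{ass} need only hold on $[\tfrac12,1]$, whereas your argument requires it all the way to $s=0$. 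The remaining stretch of the geodesic, from $g(N(g))$ to $o$, is then absorbed by a single Poincar\'e inequality on the ball $B(o,(1-\A^{-1})d(g))$ (the term $J_{21}$), estimated using only the crude two-sided heat-kernel bounds \eqref{h_up}--\eqref{h_down} and incomplete-gamma asymptotics, not the MCP. Without this decomposition-plus-near/far-split structure, your scheme cannot produce a uniform bound, and Assumption~\ref{ass} alone does not ``encapsulate'' the estimate you need.

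Two smaller remarks. You subtract $f(o)$ using $\int\nabla p=0$; the paper instead subtracts $f_{B(o,1)}$, which is what makes the near-region Poincar\'e step clean. And your representation replaces $|\nabla f|$ evaluated pointwise along the geodesic; the paper uses the ball-averaged representation formula of Lemma~\ref{lem1}, whose kernel $d(g',g(i))^{-Q+1}$ provides additional integrability that is then exploited via polar coordinates in the $J_{22}(i)$ estimate. Pointwise values of $|\nabla f|$ along a one-dimensional curve do not interface naturally with the volumetric change of variables you are proposing.
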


For higher order derivatives, we have:

\begin{theo} \label{thm2}
Let $\G$ be a step-two Carnot group. If Assumptions \ref{ass} and \ref{ass2} hold, then for any $k\in\nn^*$ there is a constant $C(k)>0$ such that
\begin{equation} \label{hsg2}
	\left|\nabla^k e^{h \Delta} f\right|(g) \leq C(k)\, e^{h \Delta}(|\nabla^k f|)(g), \quad \forall \, g \in \G, h>0, f \in C_c^{\infty}(\G).
\end{equation}
\end{theo}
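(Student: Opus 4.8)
The plan is to reduce Theorem \ref{thm2} to Theorem \ref{thm1} by an inductive argument on the order $k$, using the gradient estimate \eqref{hsge} together with the higher-order heat kernel bound \eqref{hka2} from Assumption \ref{ass2}. The base case $k=1$ is exactly Theorem \ref{thm1}. For the inductive step, the key observation is that $\nabla^k e^{h\Delta} f$ can be rewritten by moving derivatives onto the kernel. Concretely, one writes, using left-invariance and the convolution structure, $e^{h\Delta} f(g) = \int_\G p_h(g^{-1} g') f(g') \, \dif\vol(g')$, and then one would like to express the horizontal derivatives of the semigroup acting on $f$ partly as the semigroup acting on $\nabla f$ and partly as a convolution of $f$ against derivatives of $p_h$. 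The standard trick (as in \cite{Li06, HL10, Q15}) is to split one derivative off to act on $f$ directly — giving a factor $e^{h\Delta}(\nabla^{\,\cdot} \nabla^{k-1} f)$-type term to which the inductive hypothesis or Theorem \ref{thm1} applies — while the remaining structure is controlled by \eqref{hka2}. Scaling is used to transfer the bound \eqref{hka2} for $p = p_1$ to all $p_h$: a parabolic dilation gives $|\nabla^k p_h(g)| \le C_k \, h^{-k/2}(1 + d(g)/\sqrt{h})^k \, p_h(g)$.

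The core of the argument is to establish an intermediate estimate of the form
\begin{equation}\label{hka2h}
\left|\nabla^k e^{h\Delta} f\right|(g) \le C(k) \sum_{j=0}^{k} h^{(j-k)/2} \, e^{h\Delta}\!\left((1 + d/\sqrt{h})^{k-j} \, |\nabla^j f|\right)(g),
\end{equation}
or more precisely a version where the $j<k$ terms carry weights that ultimately get absorbed. This is proved by differentiating the convolution and distributing derivatives between the kernel and $f$ via the group structure; the weight factors $(1+d/\sqrt h)^{k-j}$ arise precisely from \eqref{hka2}. One then has to remove these polynomial weights and the negative powers of $h$. The mechanism for this is the semigroup property combined with Theorem \ref{thm1}: writing $e^{h\Delta} = e^{(h/2)\Delta} \circ e^{(h/2)\Delta}$, one applies the weighted estimate to the inner semigroup and the plain gradient estimate \eqref{hsge} to the outer one, using that $e^{(h/2)\Delta}$ smooths and that the weight $(1+d/\sqrt h)^m p_h$ is, after the extra application of the semigroup, comparable to $p_h$ up to a constant — this last point being a consequence of the Gaussian-type decay of $p_h$ encoded in the bounds underlying Assumption \ref{ass} and the measure $\mu$. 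Iterating this bootstrap, each step trading a power of the weight for a constant, eventually yields \eqref{hsg2} with all weights and $h$-powers gone.

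I would organize the proof as follows: first, record the scaling identity for $p_h$ and the resulting form of \eqref{hka2} at scale $h$; second, prove the weighted commutator-type bound \eqref{hka2h} by induction on $k$, differentiating the convolution and splitting derivatives; third, prove a lemma absorbing the weight $(1+d/\sqrt h)^m$ into a constant after one application of the heat semigroup (this is where the Gaussian decay of the kernel, already implicit in the verification of Assumption \ref{ass}, enters); fourth, combine these via the semigroup decomposition $e^{h\Delta} = e^{(h/2)\Delta} e^{(h/2)\Delta}$ and Theorem \ref{thm1} to conclude. The main obstacle I anticipate is the third step — controlling the weighted quantity $e^{h\Delta}\!\left((1+d/\sqrt h)^m |\nabla^j f|\right)$ by $C \, e^{h\Delta}(|\nabla^j f|)$ uniformly in $h$. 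This requires quantitative control on how the distance function interacts with the heat kernel under convolution, i.e.\ an estimate of the type $\int p_{h/2}(g^{-1}g') (1 + d(g')/\sqrt h)^m \, p_{h/2}((g')^{-1} g'') \, \dif\vol(g') \lesssim p_h(g^{-1}g'')$, which is a Gaussian-integral computation relying on the upper and lower heat kernel bounds on $\G$ (available in the concrete examples, and which is precisely why Assumption \ref{ass2} is stated as a hypothesis rather than derived). The bookkeeping of the induction in step two — keeping track of which derivatives land on the kernel versus on $f$ and ensuring the non-commutativity of $\nabla$ only produces lower-order terms already covered — is routine but must be done carefully.
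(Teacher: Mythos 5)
Your plan has a genuine gap in the step you yourself flag as the main obstacle, and the paper resolves the problem by an entirely different mechanism. The estimate you propose,
\[
\int_\G p_{h/2}(g^{-1}g')\,\bigl(1+d(g')/\sqrt{h}\bigr)^{m}\,p_{h/2}\bigl((g')^{-1}g''\bigr)\,\dif\vol(g')\ \lesssim\ p_h(g^{-1}g''),
\]
is false: here $d(g')=d(g',o)$ is the distance to the identity, not to the base point $g$, so the weight does not feel the Gaussian decay of $p_{h/2}(g^{-1}\cdot)$. Taking $g=g''$ with $d(g)\sim R\gg\sqrt h$ makes the left side blow up like $(1+R/\sqrt h)^m p_h(o)$ while the right side stays $p_h(o)$. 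Even if you correct the weight to $\bigl(1+d(g,g')/\sqrt h\bigr)^m$ — which \emph{is} controlled by the moment bounds of the heat kernel — you then face the second problem: your intermediate inequality \eqref{hka2h} leaves over lower-order terms $h^{(j-k)/2}e^{h\Delta}\bigl(|\nabla^j f|\bigr)$ with $j<k$, whose prefactors diverge as $h\to 0^+$ and which involve the wrong order of derivative of $f$. ``Absorbing the weight'' does nothing to these terms; the inductive hypothesis bounds $|\nabla^j e^{h\Delta}f|$, not $e^{h\Delta}(|\nabla^j f|)$, so Theorem \ref{thm1} cannot be used to kill them. The scheme $e^{h\Delta}=e^{(h/2)\Delta}e^{(h/2)\Delta}$ does not repair this, because applying the plain gradient estimate to the inner semigroup still leaves $|\nabla^j e^{(h/2)\Delta}f|$ with $j<k$ in play.

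The paper's proof avoids both issues with a different key idea. After reducing to $h=1$, $g=o$, it writes $\nabla^k e^{\Delta}f(o)=\nabla^k e^{\Delta}\bigl(f-P_k(B(o,\B),f)\bigr)(o)$, where $P_k(B(o,\B),f)$ is the degree-$<k$ Poincar\'e polynomial of Lemma \ref{lem2}. Since $\X^I$ with $|I|'=k$ annihilates any polynomial of homogeneous degree $<k$, there is no lower-order debris and no divergent $h$-powers. By Assumption \ref{ass2} the problem then reduces to showing $\int|f-P_k(B(o,\B),f)|\,(1+d)^k\,p\lesssim_k\int|\nabla^k f|\,p$. This is where the real work happens: the auxiliary Lemma \ref{lemr} — a weighted Poincar\'e-type inequality whose $k=1$ case is extracted from the proof of Theorem \ref{thm1} and whose higher cases are obtained by induction, an integration-by-parts argument, and the stochastic completeness — together with the $k$-th order Poincar\'e inequality $\pik(k)$ and a bootstrap in the radius $\B$ yield the conclusion. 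So your reuse of Theorem \ref{thm1} and of Assumption \ref{ass2} is in the right spirit, but the semigroup-splitting-and-weight-absorption mechanism you propose is not the one that works; the decisive ingredients in the paper are the Poincar\'e polynomial $P_k(B(o,\B),f)$ and Lemma \ref{lemr}, both of which are absent from your outline.
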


As in \cite{Z23},  the results can also be generalized to the Riemannian gradient $\nabla_\RR$ and full-Laplacian $\Delta_\RR$ (see Section \ref{s2} for the precise definition). 

\begin{theo} \label{thm3}
Let $\G$ be a step-two Carnot group. 
\begin{enumerate}[(i)]
\item If Assumption \ref{ass} holds, then there is a constant $C > 1$ (not necessarily the same $C$ in Assumption \ref{ass}) such that
	\begin{equation*}
		\left|\nabla_\RR e^{h \Delta_\RR} f\right|(g) \leq C\, e^{h \Delta_\RR}(|\nabla_\RR f|)(g), \quad \forall \, g \in \G, h>0, f \in C_c^{\infty}(\G).
	\end{equation*}
\item If additionally Assumption \ref{ass2} holds, then	for any $k\in\nn^*$  there is a constant $C(k)>0$ such that
	\begin{equation*}
		\left|\nabla_\RR^k e^{h \Delta_\RR} f\right|(g) \leq C(k)\, e^{h \Delta_\RR}(|\nabla_\RR^k f|)(g), \quad \forall \, g \in \G, h>0, f \in C_c^{\infty}(\G).
	\end{equation*}

\end{enumerate}
\end{theo}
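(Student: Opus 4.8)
The plan is to derive Theorem 1.3 from Theorems 1.1 and 1.2 by exploiting the product-type structure of the Riemannian heat semigroup on a step-two Carnot group. Concretely, recall that on a step-two Carnot group $\G$ with horizontal layer spanned by $X_1,\dots,X_m$ and center spanned by $Z_1,\dots,Z_n$, the full Laplacian is $\Delta_\RR = \Delta + \sum_{j=1}^n Z_j^2$, and the Riemannian gradient is $\nabla_\RR = (\nabla, Z_1, \dots, Z_n)$. The key structural fact (already used in \cite{Z23}) is that the vertical vector fields $Z_j$ are left-invariant, central, commute with everything, and $e^{h\sum_j Z_j^2}$ acts as an ordinary Euclidean heat semigroup in the central variables; in particular it commutes with $e^{h\Delta}$, so that $e^{h\Delta_\RR} = e^{h\Delta}\circ e^{h\sum_j Z_j^2}$, and it commutes with each horizontal derivative $X_i$ as well. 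First I would set up these commutation relations carefully and record that $e^{h\sum_j Z_j^2}$ is given by convolution in the central variables against a Gaussian kernel $G_h \ge 0$ with $\int G_h = 1$.

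For part (i), I would write $\nabla_\RR e^{h\Delta_\RR} f = (\nabla e^{h\Delta}(e^{h\sum_j Z_j^2} f), \; e^{h\Delta}(e^{h\sum_j Z_j^2} Z_j f)_{j})$. The horizontal component is controlled by applying Theorem 1.1 to the function $e^{h\sum_j Z_j^2} f$ (which is smooth but not compactly supported; this is handled by a routine truncation/approximation argument since the heat kernel has Gaussian-type decay), giving $|\nabla e^{h\Delta} e^{h\sum Z_j^2} f| \le C\, e^{h\Delta}(|\nabla e^{h\sum Z_j^2} f|)$; then since $Z_j$ commutes with $\nabla$ and $e^{h\sum Z_j^2}$ is a positive averaging operator, $|\nabla e^{h\sum Z_j^2} f| = |e^{h\sum Z_j^2}\nabla f| \le e^{h\sum Z_j^2}|\nabla f|$. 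The vertical components are even easier: $|Z_j e^{h\Delta_\RR} f| = |e^{h\Delta_\RR} Z_j f| \le e^{h\Delta_\RR}|Z_j f|$ using only that $Z_j$ commutes with everything and the heat semigroup is positivity-preserving with total mass $1$. Combining the two via $|(\nabla_\RR e^{h\Delta_\RR} f)| \le |\nabla e^{h\Delta_\RR} f| + \sum_j |Z_j e^{h\Delta_\RR} f|$ and $e^{h\Delta_\RR}(|\nabla_\RR f|) \ge \max\{e^{h\Delta_\RR}|\nabla f|,\, e^{h\Delta_\RR}|Z_j f|\}$ (up to a dimensional constant) yields the claim with a new constant depending on $m,n$ and the $C$ from Theorem 1.1.

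For part (ii), the same mechanism works degree by degree. One expands $\nabla_\RR^k e^{h\Delta_\RR} f$ into components $X^\alpha Z^\beta e^{h\Delta_\RR} f$ with $|\alpha|+|\beta| = k$. Each vertical derivative commutes out harmlessly as above, reducing matters to bounding $X^\alpha e^{h\Delta}\big(e^{h\sum Z_j^2} Z^\beta f\big)$; then Theorem 1.2 (applied to $e^{h\sum Z_j^2} Z^\beta f$, again after truncation) gives $|X^\alpha e^{h\Delta}(\cdot)| \le C(|\alpha|)\, e^{h\Delta}(|\nabla^{|\alpha|}(\cdot)|)$, and commuting $Z^\beta$ and $e^{h\sum Z_j^2}$ back through as before bounds $|\nabla^{|\alpha|} e^{h\sum Z_j^2} Z^\beta f|$ by $e^{h\sum Z_j^2}|\nabla^{|\alpha|} Z^\beta f| \le e^{h\sum Z_j^2}|\nabla_\RR^k f|$. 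Summing over the finitely many multi-indices $(\alpha,\beta)$ with $|\alpha|+|\beta|=k$ produces the constant $C(k)$.

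The genuinely non-trivial points are: (a) verifying the factorization $e^{h\Delta_\RR} = e^{h\Delta} e^{h\sum Z_j^2}$ with $e^{h\sum Z_j^2}$ acting as a central Euclidean convolution — this rests on the step-two structure and the centrality of the $Z_j$, exactly the observation exploited in \cite{Z23}; and (b) justifying the application of Theorems 1.1 and 1.2, which are stated for $f \in C_c^\infty(\G)$, to the non-compactly-supported smooth function $e^{h\sum_j Z_j^2}\phi$. I expect (b) to be the main technical obstacle: one must approximate $e^{h\sum Z_j^2}\phi$ by compactly supported functions and pass to the limit in the gradient estimate, using the Gaussian decay of $p_h$ and its derivatives — for which Assumption \ref{ass2} (the bound \eqref{hka2}) and the known bound \eqref{assud}, together with standard parabolic interior estimates, provide exactly the dominated-convergence control needed. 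Everything else is bookkeeping with the commutation relations and the positivity of the semigroups.
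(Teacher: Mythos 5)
Your proposal is correct and matches the paper's proof of Theorem~\ref{thm3}: both rest on the factorization $e^{h\Delta_\RR}=e^{h\Delta}e^{h\Delta_\T}$, commute the central vector fields $\T_\ell$ through the semigroup, apply Theorem~\ref{thm2} (resp.\ Theorem~\ref{thm1}) to the horizontal piece $\X^{I_1}e^{h\Delta}(e^{h\Delta_\T}\T^{I_2}f)$, and use the positivity and unit mass of $e^{h\Delta_\T}$ to push the modulus inside. Your point (b) --- that Theorems~\ref{thm1}--\ref{thm2} are stated for $f\in C_c^\infty(\G)$ yet must be applied to the non-compactly-supported, Schwartz-type function $e^{h\Delta_\T}\T^{I_2}f$ --- is a genuine technicality the paper leaves implicit, and the truncation/dominated-convergence argument you sketch is the standard way to close it.
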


Our main example is the free step-two Carnot group with three generators $N_{3,2}$, where the uniform heat kernel asymptotic, as well as the precise upper and lower bound estimates of the heat kernel, has been established recently in \cite{LMZ23}. We will show in Subsection \ref{s42} that $N_{3,2}$ satisfies Assumptions \ref{ass} and \ref{ass2}. This solves the open problem in \cite{Q11}.

\subsection{Notation} 
We denote the set of nonnegative integers by $\nn$ and write $\nn^*:=\nn\setminus\{0\}$. For $a\in\rr$ we use $\lfloor a\rfloor$ to represent the largest integer not exceeding $a$. Given $k\in\nn^*$ and $\az=(\az_1,\ldots,\az_k)\in\nn^k$, we set $|\az|:=\sum_{i=1}^k\az_i$. 
Supposing $u\in\rr^3$, we will adopt the following convention
\begin{equation*}
\hat{u}:= \begin{cases}\frac{u}{|u|}, & \text { if } u \in \mathbb{R}^{3} \backslash\{0\},
\\ 0, & \text { if } u=0.\end{cases}
\end{equation*}

Given a $C^1$ map $F$, we denote its Jacobian determinant by $\jac(F)$. Furthermore, given a $C^2$ function $G$, we denote the Hessian matrix by $\mathrm{Hess}(G)$.

Throughout, we will use the symbols $C$ and $c$  to denote implicit positive constants which may change from one line to the next. And if necessary we will specify with a subscript on which parameters the values of $C$ and $c$ depend.

Let $w$ be a non-negative function.  By $f = O(w)$ (resp. $f = o(w)$) we mean $|f| \leq C w$ (resp. $f/w \to 0$ under some limit process). If $f$ is also real-valued, we say $f \lesssim w$ (resp. $f \gtrsim w$) if $f \leq C w$ (resp. $f \ge C\, w$). We also adopt the notation $f \sim w$ if $f \lesssim w$ and $w \lesssim f$. When the implicit constant depends on the parameter $\az_0$, we will write $O_{\az_0},o_{\az_0},\lesssim_{\az_0}, \sim_{\az_0}$, etc. 

Lastly, in this work all vectors will be regarded as column vectors unless otherwise stated. However, a column vector $t$ in $\R^q$ with scalar coordinates $t_1, \ldots, t_q$ will be written simply as $(t_1, \ldots, t_q)$. The matrix transpose is denoted by $\cdot^\transpose$.

\subsection{Structure of the paper}

We recall basic properties of step-two Carnot groups and known results in   Section \ref{s2}. In Section \ref{s31} we give the proof of Theorem \ref{thm1} while the proofs of Theorems \ref{thm2} and \ref{thm3} are provided in Section \ref{s32}.  Finally in Section \ref{s4} we will furnish concrete examples to which our main theorems can be applied.

\section{Preliminaries}\label{s2}
\setcounter{equation}{0}

\subsection{Step-two Carnot groups}

Recall that a connected and simply connected Lie group $\G$ is a step-two Carnot group
if its left-invariant Lie algebra $\mathfrak{g}$ admits a stratification
\begin{align*}
\mathfrak{g} = \mathfrak{g}_1 \oplus \mathfrak{g}_2, \quad
[\mathfrak{g}_1, \mathfrak{g}_1] = \mathfrak{g}_2, \quad
[\mathfrak{g}_1, \mathfrak{g}_2] = \{0\},
\end{align*}
where $[\cdot,\cdot]$ denotes the Lie bracket on $\mathfrak{g}$. We identify $\G$ and $\mathfrak{g}$ via the exponential map. As a result, $\G$ can be considered as $\R^q \times \R^m$, $q, m \in \N^* = \{1, 2, 3, \ldots\}$  with the group law
\begin{align}\label{gst}
(x , t) \cdot (x^{\prime}, t^{\prime}) =
\left(x + x^{\prime}, t + t^{\prime} + \frac{1}{2}\langle  \U x, x^{\prime} \rangle \right), \quad g := (x, t) \in \R^q \times \R^m,
\end{align}
where
\begin{align*}
\langle\U x,  x^{\prime} \rangle := (\langle U^{(1)} x, x^{\prime} \rangle, \ldots, \langle   U^{(m)} x,x^{\prime} \rangle) \in \R^m.
\end{align*}
Here $\U = \{U^{(1)},\ldots,U^{(m)}\}$ is an $m$-tuple of linearly independent $q \times q$ skew-symmetric matrices with real entries and $\langle \cdot, \cdot \rangle$ (or $\mbox{} \cdot \mbox{}$ in the sequel when there is no ambiguity) denotes the usual inner product on $\R^q$. The Haar measure $\vol$ on $\G$ is the Lebesgue measure. One can refer to \cite{BLU07} for more details. Let $U^{(j)} = (U^{(j)}_{l, k})_{1 \leq l, k \leq q}$ ($1 \leq j \leq m$). The canonical bases of $\mathfrak{g}_1$ and $\mathfrak{g}_2$ are defined by the left-invariant vector fields on $\G$:
\begin{align*}
\X_l(g) : = \frac{\partial}{\partial x_l} + \frac{1}{2} \sum_{j = 1}^m \Big( \sum_{k = 1}^{q} U^{(j)}_{l, k} x_k \Big) \frac{\partial}{\partial t_j}, \quad 1 \leq l \leq q, \quad \mbox{and} \quad \T_\ell(g) := \frac{\partial}{\partial t_\ell}, \quad 1  \leq \ell \leq m
\end{align*}
respectively. The horizontal gradient and canonical sub-Laplacian are defined repectively by
\[
\nabla  := (\X_1, \ldots, \X_q), \qquad  \Delta := \sum\limits_{l = 1}^q \X_l^2.
\]
Correspondingly, the Riemannian gradient and full-Laplacian are given by
\[
\nabla_\mrr := (\X_1, \ldots, \X_q,\T_1, \ldots, \T_m), \qquad  \Delta_\mrr := \Delta + \Delta_\T \quad \mbox{with} \quad  \Delta_\T := \sum_{\ell =  1}^{m} \T_\ell^2.
\]

Now for $k\in\nn^*$, we write $\X^I:=\X_{i_1}\cdots \X_{i_k}$ for $I=(i_1,\ldots,i_k) \in (\N^*)^k$ with $1 \le i_1,\dots,i_k \le q$ (the letter $I$ will be reserved for such tuples in what follows), and set $|I|' := k$. Then we define
\begin{equation*}
	|\nabla^k f|:=\left[\sum_{I: |I|'=k} |\X^I f |^2 \right]^\frac12
\end{equation*}
with the sum taken over all tuples $I$ of length $k$. Similar notation is also used for $|\nabla_\RR^k f|$.

Let $(e^{h \Delta})_{h > 0}$ be the heat semigroup and $(p_h)_{h > 0}$ the
heat kernel , i.e. the fundamental solution of $\frac{\partial}{\partial h} - \Delta$. It is well-known that $0 < p_h \in C^{\infty}(\R^+ \times \G)$, and (see for example \cite{VSC92})
\[
e^{h \, \Delta} f(g) = f * p_h(g) = \int_{\G} f(g_*) \, p_h(g_*^{-1} \cdot g) \, \dif g_*.
\]

The dilation on $\G$ is defined by
\begin{align} \label{nDS}
\delta_r(g) = \delta_r(x, t) := (r \, x, r^2 \, t), \quad \forall \, r > 0, \ g = (x, t) \in \G.
\end{align}
A function $f$ on $\G$ is called ($\delta_r$-)homogeneous of order $a\in\rr$ if 
\begin{equation*}
    f(\dz_r(g)) = r^a f(g), \quad \forall\,r>0,\,g\in\G.
\end{equation*}
The ($\delta_r$-)homogeneous order of a polynomial function $P(x,t)=\sum_{\az_1,\az_2} c_{\az_1,\az_2}\, x^{\az_1}t^{\az_2}$ on $\G$ is the integer $\max\{|\az_1|+2|\az_2|; c_{\az_1,\az_2}\neq0\}$.

It is also well-known that the heat kernel $(p_h)_{h > 0}$ admits the following properties (cf. for example \cite[Chapter~1~G]{FS82}  and references therein):
\begin{align}\label{ehk0}
p_h(g) = \frac{1}{h^{\frac{Q}{2}}} \, p_1\left(\delta_{\frac{1}{\sqrt{h}}}(g)\right), \qquad p_h(g) = p_h(g^{-1}),
\end{align}
where the homogeneous dimension $Q := q + 2m$. 
Recall that we have put $p := p_1$. 

\subsection{Left-invariant sub-Riemannian structure on $\G$}

For more details of the sub-Riemannian structure, we refer the reader to \cite{M02, R14, ABB20} and references therein for further details.

The group $\G$ is endowed with
the sub-Riemannian structure, induced by a scalar product on $\mathfrak{g}_1$, with respect to which $\{\X_l\}_{1 \le l \le q}$ are orthonormal (and the norm induced by this scalar product is denoted by $\| \cdot \|$).

A horizontal curve $\gamma: [0, \ 1] \to \G$ is an absolutely continuous path such that
\[
\dot{\gamma}(s) = \sum_{j = 1}^q u_j(s) \X_j(\gamma(s)) \qquad \mbox{for a.e. } s \in [0, \ 1],
\]
and we define its length as follows
\begin{align*}
\ell(\gamma) := \int_0^1 \|\dot{\gamma}(s)\| \, \dif s = \int_0^1 \sqrt{\sum_{j = 1}^q |u_j(s)|^2} \, \dif s.
\end{align*}
The Carnot--Carath\'eodory (or sub-Riemannian) distance between $g, g' \in \G$ is then
\[
d(g, g') := \inf\left\{\ell(\gamma); \ \gamma(0) = g, \ \gamma(1) = g', \gamma \mbox{ horizontal} \right\}.
\]
From definition it is easy to see that the Carnot--Carath\'eodory is left-invariant, namely
\begin{align}\label{lid}
d(g_* \cdot g, g_* \cdot g') = d(g,g'), \qquad \forall \, g_*, g, g' \in \G. 
\end{align}

Now let $d(g) = d(x,t) := d(g,o) = d((x,t),(0,0))$ denote the Carnot--Carath\'eodory (or sub-Riemannian) distance between the identity element $o = (0,0)$ and $g  = (x,t)\in \G$. Then we have
the following invariant property under inversion by \eqref{lid}:
\begin{align}\label{lid2}
d(g) = d(g,o) = d(o,g^{-1}) = d(g^{-1}, o) = d(g^{-1}),
\end{align}
and the scaling property:
\begin{align} \label{scap}
d(\delta_r(g)) = r \, d(g), \quad \forall \, r > 0, \ g \in \G.
\end{align}
Moreover,  the following equivalence of the Carnot--Carath\'eodory distance with a homogeneous norm
(see for example \cite[Proposition 5.1.4]{BLU07})  will be useful in the proof:
\begin{align} \label{ehd}
d(x, t)^2 \sim |x|^2 + |t|, \qquad \forall \, (x, t) \in \G.
\end{align}

We use the notation $B(g, r) := \{g'; \, d(g,g')<r\}$ ($g \in \G$, $r > 0$) to denote the open (Carnot--Carath\'eodory) ball centered at $g$ with radius $r$.  From \eqref{scap} it is easy to show  that 
\begin{align}\label{volhomo}
\vol(B(g,r))={\rm\bf C_\G} \,r^Q \qquad \mbox{with} \qquad {\rm\bf C_\G} :=\vol(B(o,1)).
\end{align}

\medskip

In the setting of step-two Carnot groups,
it is well-known that all shortest geodesics are projections of normal Pontryagin extremals, that is, the integral curves of the sub-Riemannian Hamiltonian in $T^* \G$. See for example \cite[\S~20.5]{AS04} or \cite[Theorem 2.22]{R14}. As a result, after identifying $T^*_o \G$ with $\R^q \times \R^m$, we can introduce the sub-Riemannian exponential map based at $o$, $\exp: \, \R^q \times \R^m \to \G$, in such a way that for every $(\zeta, \tau) \in \R^q \times \R^m \cong T^*_o \G$, the curve $s \mapsto \exp\{s(\zeta, \tau)\}$ is the projection of normal Pontryagin extremals onto $\G$. Here it should be pointed out that although we use the notation ``$\exp$'', the sub-Riemannian exponential map is different from the Lie group exponential map we used above.

For the sake of future usage, here we recall the formula of the sub-Riemannian exponential map $\exp$ on step-two Carnot groups. For more details, we refer to \cite[\S~13.1]{ABB20}.

Recall that $\langle\U x,  x^{\prime} \rangle := (\langle U^{(1)} x, x^{\prime} \rangle, \ldots, \langle   U^{(m)} x,x^{\prime} \rangle) \in \R^m$ and $\U = \{U^{(1)},\ldots,U^{(m)}\}$. For $\tau \in \R^m$ we define
\[
\widetilde{U}(\tau) := \sum_{j = 1}^m \tau_j \, U^{(j)}.
\]
With this notation, we have
\begin{align}\label{Expexp}
\exp(\zeta, \tau) = (x(1),t(1)),
\end{align}
with
\begin{align*}
\zeta(s) := e^{ s \, \widetilde{U}(\tau)} \, \zeta, \quad
x(s) := \int_0^s \zeta(r) \, \dif r, \quad
t(s) := \frac{1}{2} \int_0^s \langle \UBB \, x(r), \zeta(r) \rangle \, \dif r.
\end{align*}
From the formula above we can check that if $\exp(\zeta,\tau) = (x,t)$, then 
\begin{gather} \label{symExp}
\exp(- e^{\widetilde{U}(\tau)} \, \zeta, - \tau) = (-x , -t) = -(x,t).
\end{gather}

\medskip

In this paper, the cut locus of $o$, $\mathrm{Cut}_o$, is defined as
\begin{align} \label{DCUT}
\mathrm{Cut}_o := \mathcal{S}^c, \quad \mbox{ with } \, \mathcal{S} := \{g; \, \mbox{$d^2$ is $C^{\infty}$ in a neighborhood of $g$}\}.
\end{align}
By definition $\mathrm{Cut}_o$ is closed. Furthermore, it has measure zero (cf. \cite[Proposition 15]{R13}). Let 
\begin{align*}
\mathcal{D} := \{(\zeta, \tau) &\in \R^q \times \R^m \cong T^*_o \G; \, s \mapsto \exp\{s(\zeta, \tau)\} (0 \le s \le 1) \\
&\mbox{ is the shortest geodesic joining $o$ to $\exp(\zeta, \tau) \in \mathcal{S}$}\}.
\end{align*}
From the discussion of \cite[\S~2.2]{LZ21}, we can conclude the following proposition.

\begin{prop} \label{propgeo}
The sub-Riemannian exponential map $\exp$ based at $o$ is a diffeomorphism from $\mathcal{D}$ to $\mathcal{S}$. Furthermore, for $(\zeta, \tau) \in \mathcal{D}$, the curve $s \mapsto \exp\{s(\zeta, \tau)\} (0 \le s \le 1)$ is the shortest geodesic joining $o$ to $\exp(\zeta, \tau)$. In particular, we have $s (\zeta, \tau) \in \mathcal{D}$ for every $s \in (0,1]$ and $d(\exp(\zeta, \tau)) = |\zeta|$.
\end{prop}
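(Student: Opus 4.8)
The plan is to deduce Proposition~\ref{propgeo} from three standard facts about step-two Carnot groups: the absence of abnormal length-minimizers, so that minimizing curves are exactly projections of normal Pontryagin extremals and $\exp$ is onto; the Hamiltonian description of the distance squared off the cut locus (uniqueness and smooth dependence of the minimizer); and the fact that an interior point of a minimizing geodesic whose endpoint lies off $\mathrm{Cut}_o$ again lies off $\mathrm{Cut}_o$. All three are contained in the discussion of \cite[\S~2.2]{LZ21} (cf. also \cite{R13}), so the work is in assembling them. I would organize the argument as (a) smoothness of $\exp$ and the speed computation, (b) bijectivity and the smooth inverse, and (c) the star-shapedness of $\mathcal D$ together with $d(\exp(\zeta,\tau))=|\zeta|$.

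For (a), let $H$ denote the sub-Riemannian Hamiltonian on $T^*\G$, so that under $T_o^*\G\cong\R^q\times\R^m$ one has $H(o;\zeta,\tau)=\tfrac12|\zeta|^2$ — the component $\tau$ dual to $\mathfrak{g}_2$ does not enter, since $\mathfrak{g}_2$ is non-horizontal and $\X_l(o)=\partial_{x_l}$. By the very definition of $\exp$, the curve $s\mapsto\exp\{s(\zeta,\tau)\}$ is $\pi\circ e^{s\vec H}$ applied to the covector $(\zeta,\tau)$, with $\pi\colon T^*\G\to\G$; hence smoothness of $\exp$ is smooth dependence of the Hamiltonian flow on initial data. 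Since $H$ is constant along its flow, this curve has constant speed $\sqrt{2H(o;\zeta,\tau)}=|\zeta|$ and length $|\zeta|$ on $[0,1]$, so $d(\exp(\zeta,\tau))\le|\zeta|$ in general and, when $(\zeta,\tau)\in\mathcal D$ (so the curve is minimizing), $d(\exp(\zeta,\tau))=|\zeta|$, which is the last assertion of the statement.

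For (b), surjectivity $\exp(\mathcal D)=\mathcal S$ is immediate from the definition of $\mathcal D$ and the fact that each $g\in\mathcal S$ is joined to $o$ by a minimizing normal geodesic. For injectivity and the smooth inverse I would use that, for $g\in\mathcal S$, the minimizing geodesic $\gamma_g$ from $o$ to $g$ is unique and $\tfrac12 d(\cdot)^2$ is smooth near $g$, so by the first-variation formula $d_g\bigl(\tfrac12 d(\cdot)^2\bigr)\in T_g^*\G$ equals the terminal covector $\lambda_g(1)$ of the normal lift of $\gamma_g$. Then $\Psi(g):=e^{-\vec H}\bigl(d_g(\tfrac12 d(\cdot)^2)\bigr)$ is a smooth map $\mathcal S\to T_o^*\G\cong\R^q\times\R^m$ recovering the initial covector $\lambda_g(0)$, and one checks $\exp\circ\Psi=\mathrm{id}_{\mathcal S}$ and $\Psi\circ(\exp|_{\mathcal D})=\mathrm{id}_{\mathcal D}$, so $\exp|_{\mathcal D}$ is a smooth bijection with smooth inverse, i.e. a diffeomorphism onto $\mathcal S$ (and $\mathcal D$ is open, being the image under the immersion $\Psi$ of the open set $\mathcal S$ into a space of the same dimension $q+2m$). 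This step also yields the ``furthermore'' clause, since by construction $s\mapsto\exp\{s(\zeta,\tau)\}$ is precisely $\gamma_g$ for $g=\exp(\zeta,\tau)$ when $(\zeta,\tau)\in\mathcal D$.

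For (c), fix $(\zeta,\tau)\in\mathcal D$ and $s\in(0,1]$. The restriction to $[0,s]$ of the minimizing geodesic $r\mapsto\exp\{r(\zeta,\tau)\}$ is again minimizing, and after the linear reparametrization $r=s\rho$ it is, directly from the form of the definition of $\exp$, the geodesic $\rho\mapsto\exp\{\rho\cdot s(\zeta,\tau)\}$, $\rho\in[0,1]$; hence this geodesic is minimizing from $o$ to $\exp\{s(\zeta,\tau)\}$. Therefore $s(\zeta,\tau)\in\mathcal D$ as soon as $\exp\{s(\zeta,\tau)\}\in\mathcal S$, and the latter is exactly the statement that interior points of a minimizing geodesic with endpoint off $\mathrm{Cut}_o$ stay off $\mathrm{Cut}_o$. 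I expect this fact, together with the uniqueness and smooth dependence of minimizers off the cut locus used in (b), to be the only genuinely non-formal input — the crux — while everything else is bookkeeping with the Hamiltonian flow and the identification $T_o^*\G\cong\R^q\times\R^m$; these inputs are imported from \cite[\S~2.2]{LZ21} and the general sub-Riemannian theory in the absence of abnormal minimizers.
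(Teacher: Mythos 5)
The paper does not actually prove Proposition~\ref{propgeo}; it simply cites \cite[\S~2.2]{LZ21} (and implicitly \cite{R13}) for the whole package. Your sketch is correct and amounts to unpacking exactly the facts encoded in that citation --- absence of abnormal minimizers on step-two Carnot groups, the Hamiltonian description of $\exp$ with $H(o;\zeta,\tau)=\tfrac12|\zeta|^2$, the first-variation/inverse construction via $d_g\bigl(\tfrac12 d(\cdot)^2\bigr)$, and the fact that interior points of a minimizer ending off $\mathrm{Cut}_o$ remain off $\mathrm{Cut}_o$ --- so it is the same approach, just spelled out rather than deferred.
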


\subsection{Some known results on step-two Carnot groups}

In this section we collect some results on step-two Carnot groups which are useful in the proof of main theorems.

\medskip

Let us first recall the known pointwise estimates of the heat kernel (see, e.g., \cite{DP89, C93,S96,S04, CS08} for the upper bounds and \cite{V90} for the lower bounds). 
\begin{lem} \label{h_es1}
	Let $\G$ be a step-two Carnot group. Then there is a constant $C>0$ such that
	\begin{equation}\label{h_up}
		p(g) \le C (1+d(g))^{Q-1} e^{-\frac{d(g)^2}{4}},\quad \forall\, g\in\G.
	\end{equation}
Moreover, for any $\dz\in(0,1)$ there is constant $C(\dz)$ depending only on $\dz$ and $\G$ such that
\begin{equation}\label{h_down}
	p(g) \ge C(\dz)\, e^{-\frac{d(g)^2}{4(1-\dz)}},\quad \forall\, g\in\G.
\end{equation}
\end{lem}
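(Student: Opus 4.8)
The statement to prove is Lemma~\ref{h_es1}, the two-sided heat kernel bounds \eqref{h_up} and \eqref{h_down} on a general step-two Carnot group. I note first that these are cited results (the references \cite{DP89, C93, S96, S04, CS08, V90} are given), so the ``proof'' here should really be a sketch tracing the standard arguments rather than anything new. I will outline both directions separately.

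\textbf{The upper bound \eqref{h_up}.} The plan is to start from the general Gaussian upper bound for heat kernels of sub-Laplacians on unimodular Lie groups of polynomial volume growth (Varopoulos--Saloff-Coste--Coulhon \cite{VSC92}): there exist $C, c > 0$ with $p_h(g) \le C\, \vol(B(g,\sqrt h))^{-1} \exp(-c\, d(g)^2/h)$. Using \eqref{volhomo}, i.e.\ $\vol(B(g,\sqrt h)) = {\rm\bf C_\G}\, h^{Q/2}$, at $h = 1$ this already gives $p(g) \lesssim e^{-c\, d(g)^2}$. To upgrade the Gaussian exponent to the sharp constant $1/4$ and pick up the polynomial prefactor $(1+d(g))^{Q-1}$, one invokes the refined on-diagonal-to-off-diagonal arguments: either the Davies--Grigor'yan integrated maximum principle / Davies' exponential weighting method, which for any $\epsilon>0$ yields $p(g) \le C_\epsilon e^{-d(g)^2/(4+\epsilon)}$, combined with a self-improvement (Grigor'yan's trick of iterating the Davies bound) to remove the $\epsilon$ at the cost of a polynomial factor; or one quotes directly the sharp results of \cite{S96, S04, CS08} established precisely for step-two groups via the explicit complex Hamilton--Jacobi / saddle-point analysis of the heat kernel. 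The polynomial power $Q-1$ comes from the Jacobian of the exponential map near the cut locus; in the step-two setting this is where the homogeneous dimension enters.

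\textbf{The lower bound \eqref{h_down}.} Here the standard route is the Harnack chain argument. The parabolic Harnack inequality holds on step-two Carnot groups (they satisfy volume doubling \eqref{volhomo} and a scale-invariant Poincar\'e inequality, hence Saloff-Coste--Grigor'yan's characterization applies). Chaining Harnack along a near-geodesic from $o$ to $g$ of length $d(g)$, divided into $\sim d(g)^2$ time-steps, converts the on-diagonal lower bound $p_h(o) \sim h^{-Q/2}$ into $p_1(g) \ge c_1 e^{-c_2 d(g)^2}$ for some constants. To get the sharp exponent $1/(4(1-\delta))$ for every $\delta \in (0,1)$, one refines the chain: using scaling \eqref{scap}--\eqref{ehk0} and the semigroup property $p_1 = p_{\delta} * p_{1-\delta}$ together with an optimized choice of the number of Harnack steps, or alternatively one cites Varopoulos \cite{V90} directly, where this form of the lower bound is proven. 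The $\delta$ appears because the Harnack constant degenerates as the geodesic is traversed too quickly; letting the time allotted approach $d(g)^2$ from above forces the loss $(1-\delta)$ in the exponent.

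\textbf{Main obstacle.} Both the Gaussian tail and the volume asymptotics are soft and follow from doubling plus Poincar\'e, so the genuinely delicate point is matching the \emph{sharp} constant $1/4$ in the exponent together with the correct polynomial power $Q-1$ in \eqref{h_up}; this is exactly what fails for a general unimodular group and requires either the step-two structure (explicit oscillatory-integral formula for $p_h$ and stationary-phase analysis, as in \cite{S96, S04, CS08}) or Grigor'yan's self-improvement lemma applied with care. Since the statement is quoted verbatim from the literature, I would in practice simply cite \cite{S04, CS08} for \eqref{h_up} and \cite{V90} (or \cite{VSC92}) for \eqref{h_down} and remark that \eqref{ehd} and \eqref{volhomo} guarantee the homogeneous form of the bounds, rather than reproducing the stationary-phase computation.
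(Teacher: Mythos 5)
Your proposal is correct and takes essentially the same approach as the paper: the lemma is stated without proof, simply citing \cite{DP89, C93, S96, S04, CS08} for the upper bound and \cite{V90} for the lower bound, exactly as you conclude at the end. Your sketch of the underlying arguments (rough Gaussian bound via volume doubling and \eqref{volhomo}, refinement to the sharp constant $1/4$ and polynomial prefactor via the stationary-phase analysis specific to step-two groups, Harnack chaining for the lower bound with the $\delta$-loss coming from the chain being traversed near the geodesic speed) is a fair summary of what those references do, though the paper itself offers none of it.
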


The following representation formula is a special case of \cite[Theorem 1]{LW98}, together with \eqref{volhomo}. 
\begin{lem} \label{lem1} 
Let $\G$ be a step-two Carnot group. Then there exists a constant $C>0$ such that, for any ball $B \subset \G$, $f\in\goodf$, and $g\in B$,
	\begin{equation*}
		\left|f(g)-f_{B}\right| \leq C \int_{B} \frac{\left|\nabla f(g')\right|}{d(g', g)^{Q-1}} \dif g'
	\end{equation*}
Here and subsequently, $f_{B}$ represents the average of $f$ over $B$, that is,
 \begin{equation*}
	f_{B}:= \frac{1}{\vol(B)}\int_B f(g)\dif g.
\end{equation*}
\end{lem}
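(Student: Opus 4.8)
The plan is to obtain the inequality as a specialization of the abstract representation theorem of Lu--Wheeden \cite[Theorem 1]{LW98}, so the work reduces to verifying its hypotheses for $(\G,d,\vol)$ and then simplifying the resulting kernel. I would first record the two structural inputs. First, by \eqref{volhomo} one has $\vol(B(g,r)) = \mathbf{C}_\G\, r^Q$ for all $g\in\G$ and $r>0$, so $(\G,d,\vol)$ is Ahlfors $Q$-regular and in particular a doubling metric measure space. Second, along any horizontal curve $\gamma$ one has $|f(\gamma(1)) - f(\gamma(0))| \le \int_0^1 |\nabla f(\gamma(s))|\,\|\dot\gamma(s)\|\,\dif s$, so $|\nabla f|$ is an upper gradient of $f$ with respect to $d$; combined with Jerison's sub-elliptic Poincaré inequality for Carnot groups (see also \cite{VSC92}) this gives a constant $C>0$ such that, for every ball $B$ of radius $r_B$ and every $f\in\goodf$,
\begin{equation*}
\frac{1}{\vol(B)}\int_B |f - f_B|\,\dif g' \;\le\; C\, r_B\, \frac{1}{\vol(B)}\int_B |\nabla f|\,\dif g'.
\end{equation*}

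With these two facts in hand, \cite[Theorem 1]{LW98} applies and yields, for every $g\in B$,
\begin{equation*}
|f(g) - f_B| \;\le\; C \int_B \frac{d(g,g')}{\vol\big(B(g,d(g,g'))\big)}\,|\nabla f(g')|\,\dif g'.
\end{equation*}
Inserting the exact volume identity $\vol(B(g,d(g,g'))) = \mathbf{C}_\G\, d(g,g')^Q$ from \eqref{volhomo} collapses the kernel to $d(g,g')^{1-Q}$, which is precisely the asserted bound once $\mathbf{C}_\G^{-1}$ is absorbed into $C$. The right-hand side is finite since $B$ is bounded and $\int_B d(g,g')^{1-Q}\,\dif g' \lesssim \mathrm{diam}(B) < \infty$ by Ahlfors $Q$-regularity, the exponent $1-Q$ being integrable in dimension $Q$.

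The point that deserves attention is that in \cite{LW98} the Poincaré inequality is typically assumed on a fixed dilate $\sigma B$ of the ball ($\sigma \ge 1$), and passing to a representation formula with the integral taken over $B$ itself is carried out there by a Whitney/telescoping decomposition together with a Hardy--Littlewood maximal function argument; these steps use only the doubling property, which we have in the strong Ahlfors form, so no new difficulty arises (and for Carnot groups one may in fact take $\sigma=1$, using left-invariance and the dilation structure \eqref{scap}). As an alternative, fully self-contained route — available here because $Q = q + 2m \ge 4$ — one could instead use a $\delta_r$-homogeneous fundamental solution $\Gamma$ of $\Delta$ of degree $2-Q$, for which each $\X_l\Gamma$ is $\delta_r$-homogeneous of degree $1-Q$ and hence $\lesssim d(\cdot)^{1-Q}$ by \eqref{scap} and \eqref{ehd}; from $f = -\sum_l \X_l f \ast \X_l\Gamma$, localizing with a cutoff adapted to $B$ and subtracting the mean value $f_B$ reproduces the same estimate. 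I would nonetheless present the Lu--Wheeden argument as the main line, since it requires no explicit kernel computation.
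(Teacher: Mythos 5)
Your proof is correct and follows the paper's own line: the paper derives the representation formula by citing Lu--Wheeden \cite[Theorem 1]{LW98} together with the exact volume identity \eqref{volhomo}, exactly as you do; you simply spell out the verification of the hypotheses and the kernel simplification that the paper leaves implicit.
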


\medskip

We also need the higher order Poincar\'e inequality which can be obtained by iteration of the $1$-Poincar\'e inequality (i.e. the case $k = 1$ below, cf. \cite[Proposition 11.17]{HK00}). The existence of the $P_k(B,f)$ is guaranteed by \cite[Theorem F]{L00}. See also \cite[p. 168]{LLW02}.

\begin{lem} \label{lem2}
Let $\G$ be a step-two Carnot group and $k\in\nn^*$. Then there is a constant $C_k>0$ such that for any ball $B$ with radius $r(B) > 0$ and $f\in \goodf$,
	\begin{equation} \label{PIk}
		\int_B |f(g) - P_k(B,f)(g)| \dif g \le C_k \, r(B)^k \int_B |\nabla^k f(g)| \dif g.
	\end{equation}
Here $P_k(B,f)$ denotes the polynomial of homogeneous order less than $k$ which satisfies
\begin{equation*}
	\int_B \X^I (f-P_k(B,f)) (g)\dif g=0,
\end{equation*}
for all $k$-tuples $I$ obeying $|I|'<k$. Especially, when $k=1$ we easily see that $P_1(B,f)=f_{B}$, and \eqref{PIk} is just the usual $1$-Poincar\'e inequality.
\end{lem}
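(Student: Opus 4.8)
\textbf{Proof proposal for Lemma \ref{lem2}.}

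The plan is to obtain the case of general $k$ by iterating the case $k=1$, which is the classical $1$-Poincar\'e inequality on the Carnot--Carath\'eodory ball $B$ (valid on step-two Carnot groups since they are doubling and support a $1$-Poincar\'e inequality, cf. \cite[Proposition 11.17]{HK00}). First I would fix the polynomial $P_k(B,f)$: by \cite[Theorem F]{L00} there exists a polynomial of homogeneous order less than $k$ whose horizontal derivatives $\X^I$ of order $|I|'<k$ have zero average on $B$; uniqueness up to the stated normalization is routine once one observes that the monomials of homogeneous order $<k$ are linearly independent as functions and that the linear map sending such a polynomial to the vector of averages $\big(\int_B \X^I P\,\dif g\big)_{|I|'<k}$ is injective. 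I would also record the scaling/translation behaviour of $B$, namely that after a left translation and a dilation $\delta_r$ we may reduce to $r(B)\sim 1$, so that all constants below depend only on $\G$ and $k$.

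The heart of the argument is the inductive step. Suppose the estimate \eqref{PIk} holds for $k-1$ with constant $C_{k-1}$, applied to each horizontal derivative $\X_j f$ in place of $f$. Write $Q_{k-1}:=P_{k-1}(B,\X_j f)$, a polynomial of homogeneous order less than $k-1$. The key observation is that $\sum_j \X_j \mapsto$ "antiderivative" can be organized so that one may choose a polynomial $R$ of homogeneous order less than $k$ with $\X_j R = Q_{k-1}^{(j)}$ matching, up to lower order terms whose contribution is controlled, the derivatives $\X_j f$; then $f-R$ has all horizontal derivatives of order $k-1$ close in $L^1(B)$ to $\nabla^k f$, and applying the $1$-Poincar\'e inequality $k-1$ further times (or directly the $(k-1)$-case to $f-R$) yields
\[
\int_B |f - R| \,\dif g \;\le\; C\, r(B)^{k-1}\sum_j \int_B |\X_j f - Q_{k-1}^{(j)}|\,\dif g \;\le\; C\, r(B)^{k}\int_B |\nabla^k f|\,\dif g ,
\]
using \eqref{PIk} for $k-1$ in the last step. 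Finally one replaces $R$ by the correctly normalized $P_k(B,f)$: since both $R$ and $P_k(B,f)$ are polynomials of homogeneous order $<k$ and $\int_B \X^I(f-P_k(B,f))=0$ for $|I|'<k$, the difference $R-P_k(B,f)$ is a polynomial whose relevant averaged derivatives are controlled by $\int_B|\nabla^k f|$; a dimensional/compactness argument on the finite-dimensional space of such polynomials on the normalized ball converts these averaged-derivative bounds into an $L^1(B)$ bound of the same order $r(B)^k$, and the triangle inequality closes the induction.

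The main obstacle I anticipate is the bookkeeping in the inductive step: matching the horizontal "antiderivative" $R$ to the polynomials $P_{k-1}(B,\X_j f)$ is not entirely formal because the horizontal vector fields $\X_j$ do not commute (their commutators generate $\mathfrak{g}_2$), so the family $\{Q_{k-1}^{(j)}\}_j$ need not be the horizontal gradient of a single polynomial on the nose; one must check that the obstruction lies in homogeneous order $<k-1$ terms whose $L^1(B)$ norm is absorbed into $C_k\,r(B)^k\int_B|\nabla^k f|$. This is where the homogeneous grading of polynomials on $\G$ and the precise definition of $P_k(B,f)$ via vanishing averages of lower-order derivatives must be used carefully; everything else (doubling, the base case, the dilation reduction, the finite-dimensional comparison of polynomials) is standard.
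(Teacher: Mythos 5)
Your overall plan (iterate the $1$-Poincar\'e inequality, taking the projection polynomial $P_k(B,f)$ from \cite{L00}) is the same route the paper takes -- the paper's proof is essentially a citation to \cite{HK00}, \cite{L00} and \cite{LLW02}. However, the inductive step you sketch points in the wrong direction, and the ``main obstacle'' you flag -- that the family $\{Q_{k-1}^{(j)}\}_j$ need not be the horizontal gradient of a single polynomial -- is real and is \emph{not} resolved by your remark that the obstruction ``lies in homogeneous order $<k-1$ terms''; as written, this leaves a genuine gap. The fix is to run the induction in the opposite direction, so that no antiderivative is ever constructed. Each $\X_j$ lowers the homogeneous order of a polynomial by exactly one, hence $\X_j P_k(B,f)$ has homogeneous order $<k-1$; and for any tuple $J$ with $|J|'<k-1$ the defining normalization of $P_k(B,f)$ gives
\[
\int_B \X^J\bigl(\X_j f-\X_j P_k(B,f)\bigr)\,\dif g=\int_B \X^{I}\bigl(f-P_k(B,f)\bigr)\,\dif g=0,
\qquad I=(j_1,\dots,j_{|J|'},j),\ |I|'=|J|'+1<k.
\]
By the uniqueness you correctly establish, this forces $\X_j P_k(B,f)=P_{k-1}(B,\X_j f)$ for every $j$. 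Now the induction closes cleanly: apply the $1$-Poincar\'e inequality to $h:=f-P_k(B,f)$ (whose average on $B$ vanishes by the $|I|'=0$ normalization) to get $\int_B|f-P_k(B,f)|\,\dif g\lesssim r(B)\sum_j\int_B|\X_j f-\X_j P_k(B,f)|\,\dif g$, and then apply the case $k-1$ to each $\X_j f$. No compatibility condition among the non-commuting $\X_j$ ever arises, so the difficulty you anticipated simply disappears.
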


The inequality \eqref{PIk} will be referred to as $\pik(k)$.

\section{Proof of Theorem \ref{thm1}}\label{s31}
\setcounter{equation}{0}

Suppose that Assumption \ref{ass} holds. Using the translation  and the dilation properties, we only have to show \eqref{hsge} for $h=1$ and $g=o$. Moreover, by the gradient estimate \eqref{assud}, together with \eqref{ehk0} and \eqref{lid2}, we have 

\begin{equation*}
    |\nabla p(g^{-1})| \lsim (1+d(g^{-1}))\, p(g^{-1}) = (1+d(g))\, p(g),
\end{equation*}
which, together with the stochastic completeness (that is, $\int p = 1$) of the heat kernel implies that
\begin{align*}
    |\nabla e^{\Delta}(f)(o)| &= \left|\int_\G \nabla p(g^{-1}) (f(g)-f_{B(o,1)}) \dif g \right| \lsim \int_\G \left| f(g)-f_{B(o,1)} \right| (1+d(g)) \, p(g) \dif g \\
    &=\int_{B(o, \A)} + \int_{B(o, \A)^c} =: J_1 +J_2,
\end{align*}
where $\A \gg 1$ is a large constant to be chosen later. As a result, it suffices to establish the following estimates
\begin{equation}\label{hsg1}
    J_i \lsim \int_\G |\nabla f(g)|\, p(g) \dif g, \quad i=1,2. 
\end{equation}

The proof for $i=1$ of \eqref{hsg1} is simple. For $i=2$ we proceed by adapting some ideas from \cite{Li06,HL10}. More precisely, we split the integrand into finite small pieces along the shortest geodesic from $o$ to an integration variable, and on each small piece the representation formula and the Poincar\'e inequality are applied as in \cite{Li06,HL10}. The main differences are: 
	\begin{compactenum}[(i)]
		\item Delicate upper and lower bound estimates of the heat kernel are not required.  We only need rough bounds \eqref{h_up} and \eqref{h_down}.
		\item Assumption \ref{ass} offers us an approach to  bypass the technicalities in calculating the Jacobian determinant of the sub-Riemannian exponential map, which seems quite complex, even on H-type groups; see \cite[\S~4]{HL10}. On the other hand, one can verify Assumption \ref{ass} through other  means (see, e.g., Subsection \ref{s42} below).
	\end{compactenum}

\subsection{Estimation of $J_1$}\label{sJ1}
The positivity of the heat kernel implies that, if $D>0$, then
\begin{equation}\label{posi_p}
	\mbox{$p(\cdot) \sim_D 1$ \quad on \quad $B(o,D)$.}
\end{equation}
As a result, we have
\begin{equation*}
    J_1 \lsim \int_{B(o,\A)} \left| f(g)-f_{B(o,1)} \right| \dif g \le \int_{B(o,\A)} \left| f(g)-f_{B(o,\A)} \right| \dif g +\int_{B(o,\A)} \left| f_{B(o,1)}-f_{B(o,\A)} \right| \dif g.
\end{equation*}
By the $1$-Poincar\'e inequality \pik(1), the first item on the rightmost side is majorized by 
$$C \A \int_{B(o,\A)}|\nabla f(g)| \dif g.$$
While the second one equals
\begin{align*}
   \vol(B(o,\A)) \left| f_{B(o,1)}- f_{B(o,\A)} \right| &\le \frac{\vol(B(o,\A))}{\vol(B(o,1))}\int_{B(o,1)} \left| f(g') - f_{B(o,\A)} \right| \dif g' \\
   &\lsim \, \int_{B(o,\A)} \left| f(g') - f_{B(o,\A)} \right|  \dif g' \lsim \int_{B(o,\A)} |\nabla f(g')| \dif g' 
\end{align*}
by the $1$-Poincar\'e inequality \pik(1) again.
These calculations and \eqref{posi_p} yield 
\begin{equation*}
	J_1 \lsim	\int_{B(o,\A)}|\nabla f(g)| \dif g \lsim \int_{B(o,\A)}|\nabla f(g)| \,p(g) \dif g  \le \int_{\G}|\nabla f(g)| \,p(g) \dif g
\end{equation*}
as desired.

\subsection{Estimation of $J_2$} \label{sj2}
Let $\ss_\A:=\left\{g=(x,t)\in\ss; d(g) > \A \right\}$. Since the cut locus  $\mathrm{Cut}_o$, as well as the Carnot--Carath\'eodory sphere, has measure zero, we obtain
\begin{equation*}
	J_2= \int_{\ss_\A} \left| f(g)-f_{B(o,1)} \right| (1+d(g)) \, p(g) \dif g.
\end{equation*}
Next we put
\begin{equation} \label{est_N}
	N(g):= \lfloor 2\A^{-1}d(g)^3 \rfloor,\quad g\in\G.
\end{equation}
In the sequel we will always assume that $g\in\ss_{\A}$. Define 
\begin{gather*}
	s(i):= 1- i \,d(g)^{-3}, \quad i\in\nn\cap[0, 10 N(g)],\\
	g(i)=(x(i),t(i)) :=\gz(s(i)),
\end{gather*}
where $\gz: [0,\ 1] \to \G$ is the unique shortest geodesic joining $o$ to $g$.
By definition one has
\begin{equation} \label{d_si}
	d(g(i)) = s(i) d(g), \quad d(g(i),g(j)) = |i-j| d(g)^{-2}, \quad 0\le i,j\le 10 N(g).
\end{equation}

It can be easily derived that, as $\A$ is large enough, 
\begin{gather}
1 \ll \frac{d(g)}{2} \le  d(g(N(g))) + 4 d(g)^{-2}  = (1-N(g) d(g)^{-3}) d(g) +4 d(g)^{-2} \le (1-\A^{-1}) d(g). \nonumber
\end{gather}
Via the triangle inequality, we have 
\begin{align*}
\left|f(g)-f_{B(o,1)}\right| &\leq \left|f(g)-f_{B(g(N(g)), 2  d(g)^{-2})}\right| 
+ \, \left| f_{B(g(N(g)), 2  d(g)^{-2})} - f_{B(o,d(g(N(g))) + 4 d(g)^{-2})}\right|\\
&\qquad+ \left|  f_{B(o,d(g(N(g))) + 4 d(g)^{-2}) } - f_{B(o,1)}\right|.
\end{align*}
Thus we can bound the second term:
\begin{align*}
&\left| f_{B(g(N(g)), 2  d(g)^{-2})} - f_{B(o,d(g(N(g))) + 4 d(g)^{-2})}\right| \\
  &\qquad\le \,  \frac1{\vol(B(g(N(g)), 2  d(g)^{-2}))}\int_{B(g(N(g)), 2  d(g)^{-2})} |f(g')  - f_{B(o,d(g(N(g))) + 4 d(g)^{-2})}| \dif g'\\
  &\qquad\le \, \frac1{\vol(B(o, 2  d(g)^{-2}))} \int_{B(o,d(g(N(g))) + 4 d(g)^{-2})} |f(g')  - f_{B(o,d(g(N(g))) + 4 d(g)^{-2})}| \dif g' \\
  &\qquad\lesssim d(g)^{2Q+1}  \int_{B(o,d(g(N(g))) + 4 d(g)^{-2})} |\nabla f(g')| \dif g' \\
  &\qquad\le \,d(g)^{2Q+1}  \int_{B(o,(1-\A^{-1}) d(g))} |\nabla f(g')| \dif g',
\end{align*}
where we have used \pik(1) in the ``$\lesssim$''. Similarly, it holds that the third term
\begin{align*}
    \left|  f_{B(o,d(g(N(g))) + 4 d(g)^{-2}) } - f_{B(o,1)}\right|
   \lsim d(g)^{2Q+1}  \int_{B(o,(1-\A^{-1}) d(g))} |\nabla f(g')| \dif g'.
\end{align*}
Hence
\begin{align*}
	J_2 &\lsim \int_{\ss_\A} \left[ \int_{B(o,(1-\A^{-1}) d(g))} |\nabla f(g')| \dif g'\right]  d(g)^{2Q+ 2} p(g) \dif g \\
	&\qquad +  \int_{\ss_\A} \left|f(g)-f_{B(g(N(g)), 2  d(g)^{-2})}\right| d(g) p(g) \dif g =: J_{21} + J_{22}.
\end{align*}

\subsubsection{Estimate of $J_{21}$}
Noting that with  $M=M(g'):=(1-\A^{-1})^{-1} d(g')$,
\begin{align*}
&\left\{(g,g')\in\G\times\G; d(g) > \A, d(g') <  (1-\A^{-1}) d(g) \right\} \\
&\qquad\subset\left\{(g,g');  d(g') < \A  \right\} 
\cup \left\{(g,g'); d(g') \ge \A, d(g) \ge M \right\},
\end{align*}
then by \eqref{h_up} we have
\begin{align*}
	J_{21} &\lsim \int_{B(o, \A)}\left|\nabla f\left(g'\right)\right| \mathrm{d} g' \int_{\G} d(g)^{3Q+1} e^{-\frac{d(g)^2}{4}} \mathrm{~d} g \\
	& \qquad + \int_{B(o, \A)^c}\left|\nabla f\left(g'\right)\right|\left[\int_{B(o, M)^c} d(g)^{3Q+1 } e^{-\frac{d(g)^2}{4}} \mathrm{~d} g\right] \mathrm{d} g' =: J_{211} + J_{212}.
\end{align*}

The estimate of $J_{211}$ is easy since 
\begin{equation*}
	J_{211} \lsim \int_{B(o, \A)} |\nabla f(g')| \dif g' \sim  \int_{B(o, \A)} |\nabla f(g')| \,p(g') \dif g' \le  \int_\G |\nabla f(g')| \,p(g')\dif g',
\end{equation*}
where in the second step we have used \eqref{posi_p}.

To bound $J_{212}$, notice that $d(g')\ge\A\gg1$. We appeal to the polar coordinate formula (see, e.g., \cite[PROPOSITION 1.15]{FS82}). In fact, denoting by $\sigma(S_{\G})$ the surface area of unit sphere $S_{\G}:=\{g\in\G; \, d(g)=1\}$,  one sees that
\begin{align*}
	\int_{B(o, M)^c} d(g)^{3Q+1 } e^{-\frac{d(g)^2}{4}} \mathrm{~d} g &= \sigma(S_{\G}) \int_M^\infty  r^{4Q} e^{-\frac{r^2}4} \dif r
	=  2^{4Q }\sigma(S_{\G}) \int_{\frac{M^2}4}^\infty r^{\frac{4Q - 1}{2}} e^{-r} \dif r \\
 &=2^{4Q}\sigma(S_{\G}) \, \Gamma\left(\frac{4Q + 1}{2},\frac{M^2}{4}\right) \sim M^{\frac{4Q - 1}{2}} e^{-\frac{M^2}{4}},
\end{align*}
as long as we choose $\A$ to be large enough. Here $\Gamma(\cdot,\cdot)$ is the incomplete gamma function, whose asymptotic properties (see, e.g. \cite[\S~8.11.2]{P10}) have been used in the last step.  Hence the leftmost side of the above formula can be controlled by 
\begin{equation} \label{low_use}
C d(g')^{\frac{4Q - 1}2}\cdot e^{-\frac{d(g')^2}{4(1-\A^{-1})^2}} = C d(g')^{\frac{4Q - 1}2} e^{-\frac{\A^{-1}d(g')^2}{4(1-\A^{-1})^2}} \cdot e^{-\frac{d(g')^2}{4(1-\A^{-1})}} \lsim p(g')
\end{equation}
via the lower bound estimate \eqref{h_down} (with the choice $\dz=\frac1{\A}$), which implies that
\begin{equation*}
	J_{212} \lsim \int_{B(o,\A)^c} |\nabla f(g')| \,p(g') \dif g' \le  \int_\G |\nabla f(g')| \,p(g')\dif g'.
\end{equation*}
This, together with the estimate of $J_{211}$, yields the desired bound for $J_{21}$, i.e.,
\begin{equation*}
	J_{21} \lsim  \int_\G |\nabla f(g')| \,p(g')\dif g'.
\end{equation*}

\subsubsection{Estimate of $J_{22}$}
We are left to illustrate that $J_{22}$ also has such upper bound, that is,
\begin{equation*}
	J_{22} = \int_{\ss_\A} \left|f(g)-f_{B(g(N(g)), 2  d(g)^{-2})}\right| d(g) p(g) \dif g \lsim  \int_\G |\nabla f(g)| \,p(g)\dif g.
\end{equation*}

By the definition of $g(i)$ we have
\begin{equation*}
 g(i+1)\in B(g(i),2d(g)^{-2}),	\quad B(g(i),2d(g)^{-2}) \subset B(g(i+1),4d(g)^{-2}).
\end{equation*}
Then it follows from the triangle inequality and Lemma \ref{lem1} that
\begin{align*}
	|f(g) - f_{B(g(N(g)), 2  d(g)^{-2})}| &\le \sum_{i=0}^{N(g)-1} |f(g(i)) - f(g(i+1))| + |f(g(N(g))) - f_{B(g(N(g)), 2  d(g)^{-2})}| \\ 
	& \lsim \sum_{i=0}^{N(g)-1} \left[|f(g(i)) - f_{B(g(i),2d(g)^{-2})}| + |f(g(i+1)) -f_{B(g(i),2d(g)^{-2})} |  \right] \\
		&\qquad + \int_{B(g(N(g)),2d(g)^{-2})} d(g',g(N(g)))^{-Q+1} |\nabla f(g')| \dif g'\\
	&\lsim \sum_{i=0}^{N(g)-1} \left[ \int_{B(g(i),2d(g)^{-2})}  \frac{ |\nabla f(g')| \dif g'}{d(g',g(i))^{Q-1}} + \int_{B(g(i),2d(g)^{-2})}  \frac{ |\nabla f(g')| \dif g'}{d(g',g(i+1))^{Q-1}}	\right]\\
	 	&\qquad + \int_{B(g(N(g)),2d(g)^{-2})} 
	 	d(g',g(N(g)))^{-Q+1} |\nabla f(g')| \dif g'\\
	&  \lsim \sum_{i=0}^{N(g)} \int_{B(g(i),4d(g)^{-2})} d(g',g(i))^{-Q+1} |\nabla f(g')| \dif g'.
\end{align*}
 Consequently, 
\begin{equation*}
	J_{22} \lsim \int_{\ss_\A} \left[ \sum_{i=0}^{N(g)} \int_{B(g(i),4d(g)^{-2})} d(g',g(i))^{-Q+1} |\nabla f(g')| \dif g' \right]\, d(g)\, p(g) \dif g.
\end{equation*}

On the other hand, recalling \eqref{est_N}, we see for any $(g,g',i)\in\ss_\A\times\G\times\N$ satisfying $d(g',g(i))<4d(g)^{-2}$ and $i \le N(g)$
it holds 
\begin{align*}
	d(g') &\ge d(g(i)) - d(g',g(i)) \ge (1-i\,d(g)^{-3}) d(g) - 4d(g)^{-2} \\
	&\ge d(g) - (N(g)+4)d(g)^{-2} 
	> \frac12\A,
\end{align*}
and furthermore, 
$$d(g') \le d(g(i)) + d(g',g(i)) \le (1-i\,d(g)^{-3}) d(g) + 4d(g)^{-2} \le d(g) +4 d(g)^{-2},$$ 
provided that $\A$ is large enough. Then by \eqref{est_N} again we obtain
\begin{equation} \label{ddasy}
	\frac12\A < d(g')=d(g)(1+O(\A^{-1})), \quad \mbox{as}\,\,\A\to + \infty,
\end{equation}
which implies
\[
d(g',g(i))<10d(g')^{-2}, \qquad N(g) \le 2 N(g')
\]
for $\A$ large. As a result, choosing large $\A$ and an application of Fubini's theorem imply
\begin{equation} \label{mod1}
	J_{22} \lsim \int_{\overline{B(o,\frac12\A)}^{\,c}} 
 \left[ \sum_{i=0}^{2N(g')} \int_{\{g\in{\ss_\A}; \, d(g',g(i)) < 10d(g')^{-2}\}} d(g',g(i))^{-Q+1} p(g) \dif g \right]\,|\nabla f(g')| \,d(g')\dif g'.
\end{equation}
Thus to obtain the required upper bound of $J_{22}$ it suffices to prove that, for any $g'$ obeying $d(g') > \frac12\A$,
\begin{equation} \label{mod2}
	\sum_{i=0}^{2N(g')} \int_{\{g\in{\ss_\A}; \, d(g',g(i)) < 10d(g')^{-2}\}} d(g',g(i))^{-Q+1} p(g) \dif g =: \sum_{i=0}^{2N(g')} J_{22}(i)\lsim d(g')^{-1}p(g').
\end{equation}
For that, the following estimate will be enough:
\begin{equation}\label{j22i}
	 J_{22}(i) \lsim e^{-\frac{i}{8}d(g')^{-1}} d(g')^{-2} p(g'), \quad\mbox{when $0\le i\le 2N(g')$,  $d(g') > \frac12\A$,}
\end{equation}
since a summation on the geometric series leads to 
\begin{align*}
	\sum_{i=0}^{2N(g')} J_{22}(i) &\lsim \sum_{i=0}^{2N(g')} e^{-\frac{i}{8}d(g')^{-1}} d(g')^{-2} p(g') \le\sum_{i=0}^{+\infty} e^{-\frac{i}{8}d(g')^{-1}} d(g')^{-2} p(g') \\
	&\le \frac{d(g')^{-2} p(g')}{1-\exp\left\{-\frac{1}{8}d(g')^{-1}\right\}}\sim d(g')^{-1} p(g')
\end{align*}
by letting $\A\gg1$. 

To prove \eqref{j22i}, for any $i\in\nn\cap [0,2N(g')]$ we introduce the map $\Upsilon_i$ by setting	
\begin{equation*}
	\begin{aligned}
		\Upsilon_i: \ss_\A
		 &\rightarrow \Upsilon_i(\ss_\A)\subset\ss, \\
		 g=(x,t) &\mapsto \Upsilon_i(g) := g(i) = \gamma(s(i))=(x(i), t(i)),
	\end{aligned}
\end{equation*}
where we recall that $s(i)=s(i)(g)=1-i\, d(g)^{-3}$ and $\gz: [0,\ 1] \to \G$ denotes the unique shortest geodesic joining $o$ to $g$.

At this point it is quick to see that \eqref{j22i} can be indicated by the following lemma, whose proof will be postponed to Subsubsection \ref{coresec}.
\begin{lem} \label{core} 
	Let  
	\begin{gather} \label{L42A1}
	g\in\ss_\A, \quad	d(g') > \frac12\A,\quad d(g',g(i)) < 10 d(g')^{-2}, \quad i\in\nn\cap [0,2N(g')], 
		\end{gather}
and
	\begin{gather*}
		K_i(g):= \frac{p(g)\exp\left\{\frac{d(g)^2}{4}\right\}}{p(g(i))\exp\left\{\frac{d(g(i))^2}{4}\right\}}.
	\end{gather*}
Then by choosing suitable $\A$ we have
{\em\begin{compactenum}[(i)]
\item $\Upsilon_i$ is a $C^\infty$ diffeomorphism and $\jac(\Upsilon_i)>0$ on $\ss_{\A}$, for each $i$.		
\item There exist two universal constants $C_1,C_2>0$ such that
\begin{gather} 
	p(g)\le C_1 K_i(g) \, p(g') \, \exp\left\{-\frac{i}{8}d(g')^{-1} \right\},  \label{core1} \\
	K_i(g) \le C_2 \, \jac(\Upsilon_i)(g). \label{core2}
\end{gather}
\end{compactenum}}
\end{lem}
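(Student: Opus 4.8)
The plan is to establish the three assertions in turn — (i), then \eqref{core2}, then \eqref{core1} — fixing the threshold $\A$ large enough only at the very end, so that all the $O(\A^{-1})$-errors appearing below become harmless. Throughout write $\exp^{-1}\colon\ss\to\mathcal{D}$ for the inverse of the diffeomorphism $\exp\colon\mathcal{D}\to\ss$ of Proposition~\ref{propgeo}, and $\Phi:=p\,e^{d^2/4}$, so that $\dif\mu=\Phi\,\dif\vol$ and $K_i(g)=\Phi(g)/\Phi(g(i))$. For (i), note that $\Upsilon_i=\exp\circ S_i\circ\exp^{-1}$, where $S_i(\zeta,\tau):=(1-i|\zeta|^{-3})(\zeta,\tau)$: indeed if $\exp^{-1}(g)=(\zeta,\tau)$ then $|\zeta|=d(g)$, so $S_i(\zeta,\tau)=s(i)(g)(\zeta,\tau)$ and $\exp(S_i(\zeta,\tau))=g(i)=\Upsilon_i(g)$. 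A short estimate from \eqref{L42A1} (similar to the one yielding \eqref{ddasy}) gives $\tfrac12 d(g)\le d(g')\le2d(g)$ and $i\,d(g)^{-3}=O(\A^{-1})$, hence $s(i)(g)\in[\tfrac12,1]$. Then $S_i$ is smooth and injective (on each ray $r\mapsto r-ir^{-2}$ is strictly increasing), with $\jac(S_i)(\zeta,\tau)=(1-i|\zeta|^{-3})^{q+m-1}(1+2i|\zeta|^{-3})>0$, and $S_i(\zeta,\tau)=s(i)(g)(\zeta,\tau)\in\mathcal{D}$ by Proposition~\ref{propgeo}. Since $r(\zeta,\tau)\in\mathcal{D}$ for all $r\in(0,1]$, $\jac(\exp)$ keeps a constant sign on the segment from $(\zeta,\tau)$ to $S_i(\zeta,\tau)$, so $\Upsilon_i$ is a $C^\infty$ diffeomorphism onto its image with $\jac(\Upsilon_i)(g)=\big(\jac(\exp)(S_i(\zeta,\tau))/\jac(\exp)(\zeta,\tau)\big)\jac(S_i)(\zeta,\tau)>0$, which is (i).

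For \eqref{core2}, fix $g$ as in \eqref{L42A1}, put $s_0:=s(i)(g)\in[\tfrac12,1]$, and note that $\Phi$ is smooth and positive near $g$ and near $g(i)=\exp(s_0\exp^{-1}(g))\in\ss$. Since shortest geodesics from $o$ to points of $\ss$ are unique (Proposition~\ref{propgeo}), for every small $\epsilon>0$ with $\overline{B(g,\epsilon)}\subset\ss_\A$ one has $Z_{s_0}(o,B(g,\epsilon))=\Psi_{s_0}(B(g,\epsilon))$, where $\Psi_{s_0}(g''):=\exp(s_0\exp^{-1}(g''))$ is a $C^\infty$ diffeomorphism near $\overline{B(g,\epsilon)}$ with $\Psi_{s_0}(g)=g(i)$ and (as in (i)) $\jac(\Psi_{s_0})>0$ there. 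Applying Assumption~\ref{ass} with $E=B(g,\epsilon)\in\mathscr{B}$, using the change of variables $\mu(Z_{s_0}(o,E))=\int_E(\Phi\circ\Psi_{s_0})\,\jac(\Psi_{s_0})\,\dif\vol$, dividing by $\vol(E)$, and letting $\epsilon\to0$ (by continuity of $\Phi$ and $\jac(\Psi_{s_0})$), one obtains $\Phi(g(i))\,\jac(\Psi_{s_0})(g)\ge C^{-1}s_0^{N}\Phi(g)$, i.e. $K_i(g)\le C\,s_0^{-N}\jac(\Psi_{s_0})(g)$. Finally $s_0\ge\tfrac12$ gives $s_0^{-N}\le2^{N}$, and $\jac(\Psi_{s_0})(g)\le\jac(\Upsilon_i)(g)$ since $\jac(\Psi_{s_0})(g)/\jac(\Upsilon_i)(g)=(1-i|\zeta|^{-3})/(1+2i|\zeta|^{-3})\le1$; hence $K_i(g)\le C\,2^{N}\jac(\Upsilon_i)(g)$, which is \eqref{core2} with $C_2:=C\,2^{N}$.

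For \eqref{core1}, cancelling the common factor $p(g)$ and rearranging shows it is equivalent to
\[
p(g(i))\,\exp\Big\{\tfrac{d(g(i))^2-d(g)^2}{4}+\tfrac{i}{8}\,d(g')^{-1}\Big\}\le C_1\,p(g').
\]
From $d(g(i))=d(g)-i\,d(g)^{-2}$ we get $\tfrac14\big(d(g(i))^2-d(g)^2\big)=-\tfrac{i}{2}d(g)^{-1}+\tfrac{i^2}{4}d(g)^{-4}$, and using $i\,d(g)^{-3}=O(\A^{-1})$ together with $d(g')\ge\tfrac12 d(g)$ (both from \eqref{L42A1}), the exponent above is $\le i\,d(g)^{-1}\big(-\tfrac12+\tfrac14+O(\A^{-1})\big)\le0$ for $\A$ large, so the exponential factor is $\le1$. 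It thus suffices to prove $p(g(i))\le C_1\,p(g')$ with $C_1$ depending only on $\G$. Joining $g'$ to $g(i)$ by a length-minimizing horizontal curve $\gamma$ of length $d(g',g(i))<10\,d(g')^{-2}$, along which $d\le d(g')+10\,d(g')^{-2}\le2d(g')$, the heat-kernel gradient estimate \eqref{assud} gives
\[
\big|\ln p(g(i))-\ln p(g')\big|\le\int_0^1\|\dot\gamma(s)\|\,\tfrac{|\nabla p|}{p}(\gamma(s))\,\dif s\le d(g',g(i))\cdot C\big(1+2d(g')\big)\lesssim d(g')^{-1},
\]
which is bounded since $d(g')>\tfrac12\A$; hence $p(g(i))\le C_1\,p(g')$, completing \eqref{core1}. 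It remains only to choose $\A$ large enough that all the smallness requirements above hold simultaneously.

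The step I expect to be the main obstacle is \eqref{core2}: the heart of the matter is to extract a pointwise Jacobian inequality from the ``integrated'' measure-contraction bound of Assumption~\ref{ass}, which is only available for genuine sets $E\in\mathscr{B}$. This forces the localization to shrinking balls $B(g,\epsilon)$, the identification of $Z_{s_0}(o,B(g,\epsilon))$ with the diffeomorphic image $\Psi_{s_0}(B(g,\epsilon))$ — hence the need for part (i) and for uniqueness of minimizing geodesics on $\ss$ — and the careful handling of the mismatch between the constant dilation factor $s_0$ in $Z_{s_0}$ and the point-dependent factor $s(i)(\cdot)$ defining $\Upsilon_i$. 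By contrast the exponent bookkeeping in \eqref{core1} is routine, the only delicate point being that a single threshold $\A$ must absorb all the $O(\A^{-1})$-errors at once.
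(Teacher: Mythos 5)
Your proof is correct and takes essentially the same route as the paper. For (i) and \eqref{core1} the arguments match (the paper cites the mean-value inequality in \cite[Theorem 20.3.1]{BLU07} where you integrate along a length-minimizing curve, but the content is identical, as is the exponent bookkeeping since $d(g(i))^2-d(g)^2=(s(i)-1)(s(i)+1)d(g)^2$ gives the same bound). For \eqref{core2} the paper first proves a stand-alone reformulation (Proposition~\ref{HCP}~(ii)), converting Assumption~\ref{ass} into the pointwise inequality $H(s\eta)\,\jac(\exp)(s\eta)\ge C^{-1}s^{N-q-m}H(\eta)\,\jac(\exp)(\eta)$ via a change of variables and arbitrariness of $E$, and then plugs in $s=s(i)(g)$, using $\jac(\Gz_i)\sim1$ to pass from $\jac(\exp)(s\eta)/\jac(\exp)(\eta)$ to $\jac(\Upsilon_i)(g)$; you instead localize on shrinking balls $B(g,\epsilon)$ to extract the pointwise inequality directly, phrase it in terms of the constant-scaling map $\Psi_{s_0}$, and then compare $\jac(\Psi_{s_0})(g)=s_0^{q+m}\jac(\exp)(s_0\eta)/\jac(\exp)(\eta)$ with $\jac(\Upsilon_i)(g)$ via the explicit ratio $(1-i|\zeta|^{-3})/(1+2i|\zeta|^{-3})\le1$. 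These are equivalent: the same ingredients (change of variables, $s_0\in[\tfrac12,1]$, the two Jacobian formulas) organized a little differently, with no loss of rigor.
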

 Indeed, through \eqref{core1} and the change of variables $g = \Upsilon_i^{-1}(g(i)) $ we have
\begin{align*}
	J_{22}(i) &\lsim p(g') e^{-\frac{i}{8}d(g')^{-1} } \int_{\{g\in\ss_\A; \, d(g',g(i)) < 10d(g')^{-2}\}} d(g',g(i))^{-Q+1} K_i(g) \dif g \\
	&= p(g') e^{-\frac{i}{8}d(g')^{-1} } \int_{\{g(i); \, \Upsilon_i^{-1}(g(i))\in\ss_\A, d(g',g(i)) < 10d(g')^{-2}\}} d(g',g(i))^{-Q+1} \frac{K_i(\Upsilon_i^{-1}(g(i)))}{\jac(\Upsilon_i)(\Upsilon_i^{-1}(g(i)))} \dif g(i),
\end{align*}
which, together with \eqref{core2} and the change of variable $(g')^{-1} \cdot g(i)\mapsto \tilde{g}$  yields
\begin{align*}
	J_{22}(i) &\lsim p(g') e^{-\frac{i}{8}d(g')^{-1} }\int_{B(g',10d(g')^{-2})} d(g',g(i))^{-Q+1} \dif g(i) \\
	&= p(g') e^{-\frac{i}{8}d(g')^{-1} }\int_{B(o,10d(g')^{-2})} d(\tilde{g})^{-Q+1} \dif \tilde{g} \\
	&\lsim p(g') e^{-\frac{i}{8}d(g')^{-1} } \int_0^{10d(g')^{-2}} dr \sim p(g') e^{-\frac{i}{8}d(g')^{-1} } d(g')^{-2},
\end{align*}
where in the third step we have used the polar coordinate. This concludes the proof of \eqref{j22i}. 

Finally, it remains to provide the
\subsubsection{Proof of Lemma \ref{core}} \label{coresec}
Supposing condition \eqref{L42A1} holds, we first prove item (i). From Proposition \ref{propgeo}, it turns out that
\begin{equation*}
	\Upsilon_i  = \exp \circ\, \Gz_i \circ \exp^{-1}
\end{equation*}
with the map $\Gz_i: \mathcal{D} \to \mathcal{D}$  defined by
\begin{equation*}
	\begin{aligned}
		\Gz_i: \mathcal{D} 
		&\rightarrow \mathcal{D}, \\
		(\zeta,\tau) &\mapsto \Gz_i(\zeta,\tau) := (1-i\, |\zeta|^{-3})(\zeta,\tau).
	\end{aligned}
\end{equation*}

According to the chain rule we have
\begin{equation} \label{chain1}
	\jac(\Upsilon_i)(x,t) = \jac(\exp)(\Gz_i(\zeta,\tau)) \jac(\Gz_i)(\zeta,\tau) (\jac(\exp)(\zeta,\tau))^{-1}
\end{equation}
with $(\zeta,\tau) = \exp^{-1}(x,t)$. On the other hand, by \eqref{est_N} and \eqref{d_si} it is easy to see that if $\A$ is large enough, then
\begin{equation} \label{e101}
	\frac12 \le s(i) \le1 \quad\mbox{and}  \quad d(g(i)) =d(g)(1+O(\A^{-1})) =d(g') \gsim \A. 
\end{equation}
A direct calculation shows that the Jacobian matrix of $\Gz_i(\zeta,\tau)$ equals
\begin{align*}
3i|\zeta|^{-5}\begin{pmatrix} \zeta \\ \tau \end{pmatrix} (\zeta^\transpose,0) +  (1-i\, |\zeta|^{-3}) \,\I_{q+m}  = 3 i\,|\zeta|^{-5} \begin{pmatrix}
      \zeta \zeta^\transpose & 0 \\ \tau\zeta^\transpose &   0
    \end{pmatrix} + (1-i\, |\zeta|^{-3})\,\I_{q+m} .
\end{align*}
Hence by letting $\A\gg1$ it yields that
\begin{equation}\label{chain2}
	\jac(\Gz_i)(\zeta,\tau) = (1-i\, |\zeta|^{-3})^{q + m - 1}(1  + 2 i\,|\zeta|^{-3}) \sim 1
\end{equation}
for any $(\zeta,\tau) = \exp^{-1}(x,t)$ with $(x,t) \in \ss_{\A}$. 

Recall that from \cite[Corollary 6.8]{GZ24} we have
\begin{equation} \label{jac0}
	\jac(\exp)(\zeta,\tau) >0, \quad \forall\, (\zeta,\tau)\in\mathcal{D}.
\end{equation}
These then imply that $\jac(\Upsilon_i) > 0$ on $\ss_\A$ and $\Upsilon_i$ is a $C^\infty$ diffeomorphism.

We next show that there is a universal constant $C>0$ such that
	\begin{equation} \label{L42eq1}
	    p(g(i)) \le C p(g').
	\end{equation}
In fact, by \cite[Theorem 20.3.1 with the footnote on p. 747]{BLU07}  we have 
\begin{equation} \label{mean1}
    |\ln{p(g(i))} - \ln{p(g')}| \le  d(g(i),g') \sup\limits_{g_*\in B\left(g',d(g(i),g')\right)} |\nabla \ln{p(g_*)}|.
\end{equation}
Since as $g_*\in B\left(g',d(g(i),g') \right)\subset B\left(g',10 d(g')^{-2}\right)$ it holds $d(g_*) \sim d(g')$, then by the gradient estimate \eqref{assud}
\[
|\ln{p(g(i))} - \ln{p(g')}| \lsim d(g(i),g') \sup\limits_{g_*\in B\left(g',d(g(i),g')\right)} d(g_*) \lesssim d(g')^{-1} \lesssim 1,
\]
which gives \eqref{L42eq1}. Notice that by \eqref{d_si} and \eqref{e101} 
\begin{equation*}
    d(g(i))^2-d(g)^2 = (s(i)-1)(s(i)+1)d(g)^2\le -i\,d(g)^{-1}\le -\frac{i}2d(g')^{-1}.
\end{equation*}
Combining this with \eqref{L42eq1} we obtain \eqref{core1}.

Now with \eqref{chain1}, \eqref{chain2},  Assumption \ref{ass} and the aid of Proposition \ref{HCP} (ii) below,  one can deduce \eqref{core2} immediately by plugging $\eta = (\zeta,\tau) = \exp^{-1}(g)$ and $s = 1-i\, |\zeta|^{-3} \in[\frac12,1]$ into \eqref{mcpcheck}.

\begin{prop} \label{HCP}
Assume that $N>0$. We have:
{\em\begin{compactenum}[(i)]
\item The statement \eqref{defmcp}
is equivalent to
\begin{equation} \label{eq_mcp}
	\jac(\exp)(s \eta) \ge  s^{N - q - m} \jac(\exp)(\eta), \qquad \forall\, s\in(0,1], \, \eta \in \mathcal{D}.
\end{equation}
\item Write $h:=p\,e^{\frac{d^2}4}$ and $H:= h\circ \exp$. Then \eqref{assmcp} is equivalent to the following statement  with the same constant $C$:		
\begin{align}\label{mcpcheck}
H(s \eta) \, \jac(\exp)(s \eta) \ge C^{-1} s^{N - q - m} H(\eta)\, \jac(\exp)(\eta), \qquad \forall\, s\in(0,1], \, \eta \in \mathcal{D}.
\end{align}
\end{compactenum}}		
\end{prop}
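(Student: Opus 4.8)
The plan is to transport everything through the sub-Riemannian exponential map $\exp\colon\mathcal D\to\mathcal S$, which by Proposition~\ref{propgeo} is a $C^\infty$ diffeomorphism with $\jac(\exp)>0$ on $\mathcal D$ (positivity being \eqref{jac0}). The key — and essentially the only nontrivial — step is the identity
\[
Z_s(o,E)=\exp\bigl(s\,\exp^{-1}(E)\bigr),\qquad E\subset\mathcal S,\ s\in(0,1],
\]
where $s\,\exp^{-1}(E):=\{s\eta:\eta\in\exp^{-1}(E)\}\subset\mathcal D$ (by Proposition~\ref{propgeo}, $s\eta\in\mathcal D$ whenever $\eta\in\mathcal D$). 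Indeed, for $g''=\exp(\eta)\in\mathcal S$ the minimizing geodesic from $o$ to $g''$ is unique, namely $r\mapsto\exp(r\eta)$, $r\in[0,1]$, with $d(\exp(r\eta))=r\,|\zeta|$ for $\eta=(\zeta,\tau)$; hence any $g'$ with $d(o,g')+d(g',g'')=d(o,g'')$ lies on a minimizer $o\to g''$, which must be this geodesic, and imposing also $d(o,g')=s\,d(o,g'')=s|\zeta|$ forces $g'=\exp(s\eta)$. Conversely $\exp(s\eta)$ does satisfy the two distance relations, since subarcs of minimizers are minimizing. Because $\mathrm{Cut}_o$ is $\vol$-null — hence $\mu$-null, as $\mu\ll\vol$ — we may in \eqref{defmcp} (for $g=o$) and in \eqref{assmcp} replace $E$ by $E\cap\mathcal S$ without changing either side, so both reduce to statements about $A:=\exp^{-1}(E\cap\mathcal S)\subset\mathcal D$.

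Next I would apply the change of variables $g=\exp(\eta)$ together with the linear substitution $\eta\mapsto s\eta$, of Jacobian determinant $s^{q+m}$. For (i) this yields
\[
\vol(E)=\int_A\jac(\exp)(\eta)\,\dif\eta,\qquad
\vol\bigl(Z_s(o,E)\bigr)=\int_A\jac(\exp)(s\eta)\,s^{q+m}\,\dif\eta,
\]
so \eqref{defmcp} at $g=o$ becomes $\int_A\jac(\exp)(s\eta)\,s^{q+m}\,\dif\eta\ge s^N\int_A\jac(\exp)(\eta)\,\dif\eta$ for all such $A$. For (ii), inserting the density $h=p\,e^{d^2/4}$ and recalling $H=h\circ\exp$, the same computation gives $\mu(E)=\int_A H(\eta)\jac(\exp)(\eta)\,\dif\eta$ and $\mu(Z_s(o,E))=\int_A H(s\eta)\jac(\exp)(s\eta)\,s^{q+m}\,\dif\eta$, so \eqref{assmcp} becomes $\int_A H(s\eta)\jac(\exp)(s\eta)\,s^{q+m}\,\dif\eta\ge C^{-1}s^N\int_A H(\eta)\jac(\exp)(\eta)\,\dif\eta$ for all $A$, with the same constant $C$.

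It remains to pass between these integral inequalities and the pointwise ones \eqref{eq_mcp} and \eqref{mcpcheck}. The implication ``pointwise $\Rightarrow$ integral'' is immediate by monotonicity of the integral; in (i) one then upgrades from $g=o$ to arbitrary $g$ by left-invariance of $d$ and of $\vol$, writing $Z_s(g,E)=g\cdot Z_s(o,g^{-1}E)$. For ``integral $\Rightarrow$ pointwise'', observe that $\jac(\exp)$ — and in (ii) also $H$ — is continuous and strictly positive on $\mathcal D$; so were the pointwise inequality to fail at some $\eta_0\in\mathcal D$ (for some $s$), it would fail on an open ball $A\ni\eta_0$ with $\overline A\subset\mathcal D$, whence $E:=\exp(A)\in\mathscr B$ violates the corresponding integral inequality, a contradiction. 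I do not expect a genuine obstacle here; the only point demanding care is the first step — that $Z_s(o,E)$ \emph{equals} rather than merely contains $\exp(s\exp^{-1}(E))$, which relies on uniqueness of minimizers to points of $\mathcal S$ — together with noting that the cut locus is negligible for $\vol$ and $\mu$ alike.
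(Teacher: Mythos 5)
Your proof is correct and follows essentially the same route as the paper: identify $Z_s(o,E)$ with $\exp\bigl(s\,\exp^{-1}(E)\bigr)$ for $E\subset\mathcal S$ via uniqueness of minimizers (Proposition~\ref{propgeo}), discard the $\vol$- and $\mu$-null cut locus, change variables through $\exp$ and then $\eta\mapsto s\eta$, and pass between the resulting family of integral inequalities and the pointwise one using continuity and strict positivity of $\jac(\exp)$ and $H$. The paper states the midpoint identity and the ``$E$ arbitrary $\Rightarrow$ pointwise'' step more tersely and only writes out item (ii), but the substance is the same.
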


\begin{proof}
The proof of Proposition \ref{HCP} is easy and we only prove item (ii) since item (i) follows in a similar manner (see also \cite[Proposition 5.1 and Corollary 6.8]{GZ24} for item (i)). Suppose \eqref{assmcp} holds. Put $\mathscr{B}_\ss:=\{E\in\mathscr{B}; E\subset\ss\}$. Then for every $E \in \mathscr{B}_\ss$ we have
\[
\int_{Z_s(o,E)} h(g) \dif g \ge C^{-1} s^N \int_{E} h(g) \dif g.
\]
Notice that for any $g'' \in E \subset \mathcal{S}$, the only point satisfying 
$$d(o,\cdot) = s\, d(o,g''),\quad d(g'',\cdot) = (1-s) \,d(o,g'')$$
 is exactly $\gamma(s)$, where $\gz: [0,\ 1] \to \G$ is the unique shortest geodesic joining $o$ to $g''$. Then it follows from  a change of variable $g = \exp(\eta)$, \eqref{jac0} and Proposition \ref{propgeo}  that
\[
\int_{s \exp^{-1}(E)} H(\eta) \,\jac(\exp)(\eta) \dif \eta \ge C^{-1} s^N \int_{\exp^{-1}(E)} H(\eta)\,  \jac(\exp)(\eta) \dif \eta.
\]
Another change of variable $\eta \mapsto s \eta$ on the left side indicates
\[
s^{q + m} \int_{\exp^{-1}(E)} H(s \eta)\, \jac(\exp)(s \eta) \dif \eta \ge C^{-1} s^N \int_{\exp^{-1}(E)} H(\eta)\, \jac(\exp)(\eta) \dif \eta.
\]
Since $E$ is arbitrary, we conclude that \eqref{mcpcheck} holds. The opposite implication is now obvious for any $E \in \mathscr{B}_\ss$. This yields \eqref{assmcp} at once by the fact that $\ss^c$ is of measure zero. Indeed, for $\forall\, s\in(0,1], \, E\in\mathscr{B}$, we have $E\cap\ss\in\mathscr{B}_\ss$ and hence
\[
C^{-1} s^{N} \mu(E) = C^{-1} s^{N} \mu(E\cap\ss)\le \mu(Z_s(o,E\cap\ss)) \le \mu(Z_s(o,E)).
\]

\end{proof}

\section{Proofs of Theorems \ref{thm2} and \ref{thm3}}\label{s32}
\setcounter{equation}{0}

Suppose Assumptions \ref{ass} and \ref{ass2} hold. We first prove the following lemma, which is useful in the proof of Theorem \ref{thm2}.

\begin{lem}\label{lemr}
Let $k\in\nn^*$. Then there is a positive constant $C_k$ such that, for any $f\in C^\infty_\mathrm{pol} (\G)$,
\begin{equation} \label{eq1}
     \int_\G|f(g)| d(g)^k\,p(g) \dif g \le C_k \int_\G|\nabla^k f(g)| \, p(g) \dif g +C_k  \int_\G|f(g)|\, (1+d(g))^{k-1} \,p(g) \dif g.
\end{equation}
Here $C^\infty_\mathrm{pol}(\G)$ denotes the set of smooth functions defined on $\G$ with polynomial  growth at infinity:
\begin{equation*}
    C^\infty_\mathrm{pol} (\G):=\{f\in C^\infty; \, \forall\, k\in\nn, \exists\, D_k, m_k>0, \mbox{s.t.}\, |\nabla^kf(g)|\le D_k (1+d(g))^{m_k}, \forall\, g\in\G\}.
\end{equation*}
\end{lem}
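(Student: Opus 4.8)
My plan is to prove \eqref{eq1} by induction on $k$, the engine being a single integration by parts against the measure $p\,\dif g$ that trades one power of $d$ for one horizontal derivative of $f$. First I would dispose of two easy reductions. Fix a large constant $A\gg1$; since $p\sim_A1$ and $(1+d)^{k-1}\sim_A1$ on $B(o,A)$, the contribution of $B(o,A)$ to the left-hand side of \eqref{eq1} is $\lesssim_A\int_{B(o,A)}|f|(1+d)^{k-1}p\,\dif g$, already of the desired form, so it suffices to estimate the integral over $\{d>A\}$, where $d$ is bounded away from $0$. Next, because $d^kp\ge0$ and because every integration by parts below differentiates the test function only once, I may replace $|f|$ by the smooth, nonnegative, polynomially growing function $f_\ez:=\sqrt{f^2+\ez^2}$ (which satisfies $|\nabla f_\ez|\le|\nabla f|$ uniformly in $\ez$), prove the estimate for $f_\ez$, and let $\ez\to0$; this circumvents the non-smoothness of $|f|$.

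For the inductive step, I work on $\ss\setminus\{o\}$ (its complement is negligible), where $d^2$ is smooth and the eikonal equation $\sum_{l=1}^q(\X_l d)^2=1$ holds. As each $\X_l$ is divergence free on the unimodular group $\G$, for the horizontal vector field $V:=d^{k-1}\nabla d$ one has
\[
\int_\G\Big(\sum_l V_l\,\X_l f_\ez\Big)p\,\dif g=-\int_\G f_\ez\Big(\sum_l\X_l V_l\Big)p\,\dif g-\int_\G f_\ez\Big(\sum_l V_l\,\X_l p\Big)\dif g,
\]
with $\sum_l\X_l V_l=(k-1)d^{k-2}+d^{k-1}\Delta d=(k-2)d^{k-2}+\tfrac12d^{k-2}\Delta(d^2)$ after using $\Delta(d^2)=2+2d\Delta d$. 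The point is that the Gaussian leading behaviour $\ln p=-\tfrac14d^2+(\text{lower order})$ makes $\sum_l V_l\X_l p=d^{k-1}\langle\nabla d,\nabla p\rangle$ essentially equal to $-\tfrac12d^kp$, so after rearranging, $\int_\G f_\ez d^kp\,\dif g$ equals, modulo a controlled error, $2\int_\G\langle V,\nabla f_\ez\rangle p\,\dif g+2\int_\G f_\ez(\sum_l\X_l V_l)p\,\dif g$. The first term is $\le2\int|\nabla f_\ez|d^{k-1}p\le2\int|\nabla f|d^{k-1}p$ by Cauchy--Schwarz and $|\nabla d|=1$; in the second, the $(k-2)d^{k-2}$ piece is $\lesssim\int f_\ez(1+d)^{k-1}p$, while for the $\Delta(d^2)$ piece I would use that $d^2$ is locally semiconcave away from $o$, so $\Delta(d^2)$ is distributionally a bounded function plus a nonpositive measure on $\mathrm{Cut}_o$; since $f_\ez d^{k-2}p\ge0$, only the bounded part feeds the upper bound, again giving $\lesssim\int f_\ez(1+d)^{k-1}p$. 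Letting $\ez\to0$ yields $\int_\G|f|d^kp\,\dif g\lesssim\int_\G|\nabla f|d^{k-1}p\,\dif g+\int_\G|f|(1+d)^{k-1}p\,\dif g+(\text{error})$, the base case $k=1$ being the same argument with $d^0=1$.

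It then remains to process $\int_\G|\nabla f|d^{k-1}p\,\dif g$: bounding $|\nabla f|\le\sum_{l=1}^q|\X_l f|$ and applying the inductive hypothesis with exponent $k-1$ to each scalar function $\X_l f$, I would sum over $l$ and use the identity $\sum_{l=1}^q|\nabla^{k-1}(\X_l f)|^2=|\nabla^k f|^2$ (the $(k-1)$-tuples followed by one more index run through all $k$-tuples) to turn the main contribution into $\lesssim\int|\nabla^k f|p$, the remainder being terms of the shape $\int|\nabla^j f|(1+d)^m p$ with $j+m\le k-1$. Those, together with the ``error'' term, I would absorb by means of the auxiliary inequality $\int|g|(1+d)^j p\lesssim_j\sum_{i=0}^j\int|\nabla^i g|p$ (a consequence of the already-proven cases of the lemma) and of a Gagliardo--Nirenberg type interpolation $\int|\nabla^i f|p\lesssim\int|\nabla^k f|p+\int|f|(1+d)^{k-1}p$ for $0\le i\le k-1$, obtained by integrating the $i$ derivatives back onto $p$ and invoking $|\nabla^i p|\le C_i(1+d)^i p$ — this is precisely where Assumption \ref{ass2} is needed.

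The step I expect to be the main obstacle is the control of the ``error'' term, i.e. of the discrepancy between $\langle\nabla d,\nabla p\rangle$ and $-\tfrac12d\,p$. A priori one has only $|\nabla p|\lesssim(1+d)p$ from \eqref{assud}, which by itself produces an extra factor $1+d$ and hence $\int|f|(1+d)^k p$ rather than the sharp $\int|f|(1+d)^{k-1}p$; reabsorbing this excess power is what forces the induction to be run in a strong form and what makes Assumption \ref{ass2} (control of all $|\nabla^j p|$, and through it of $\Delta\ln p$ and of the drift $\nabla\ln p+\tfrac14\nabla(d^2)$) genuinely indispensable. A secondary, more routine, difficulty is legitimizing the integrations by parts across $\mathrm{Cut}_o$ and near $o$: the former is handled by the measure-zero property of $\mathrm{Cut}_o$ together with the favourable sign of the singular part of $\Delta(d^2)$, and the latter by the crude boundedness of all quantities on $B(o,A)$ noted at the outset.
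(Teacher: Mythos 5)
Your plan is built around a single integration by parts against the vector field $V=d^{k-1}\nabla d$, with the claim that $\langle\nabla d,\nabla p\rangle$ is ``essentially $-\tfrac12\,d\,p$'' so that the integral $\int f_\ez\,d^k p$ can be extracted up to a controlled error. This is precisely where the argument breaks. The hypotheses of the paper give only the one-sided size estimate $|\nabla p|\lesssim(1+d)\,p$ from \eqref{assud}, and Assumption~\ref{ass2} gives the analogous size estimates $|\nabla^k p|\lesssim(1+d)^k p$ for higher derivatives; none of these controls the \emph{drift} $\nabla\ln p+\tfrac12\,d\,\nabla d$. In particular, even knowing from Lemma~\ref{h_es1} that $\ln p=-\tfrac14 d^2+O(\log(1+d))$ does not let you differentiate the remainder: a bound on a function's size says nothing about the size of its gradient. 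With only $|\nabla p|\lesssim(1+d)\,p$ at hand, the term $\int f_\ez\,d^{k-1}\langle\nabla d,\nabla p\rangle$ is merely $O\bigl(\int f_\ez\,d^k\,p\bigr)$, i.e.\ of the same order as the quantity you are trying to estimate, with no sign information, so the rearrangement you propose is circular. Your own closing paragraph flags this as ``the main obstacle'' and suggests that Assumption~\ref{ass2} supplies ``control of the drift $\nabla\ln p+\tfrac14\nabla(d^2)$''; it does not, so the gap is genuine and not merely technical.

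The paper takes a different and, crucially, drift-free route: it first proves the $k=1$ case and the Poincar\'e-type inequality $\int_\G|f-\PP(f)|\,p\lesssim\int_\G|\nabla f|\,p$ directly from the estimate \eqref{hsg1} established in the proof of Theorem~\ref{thm1}. For the inductive step it replaces your geometric vector field $V$ by a \emph{polynomial}: using the equivalence $d^2\sim|x|^2+|t|$ from \eqref{ehd}, it applies the inductive hypothesis to $fP$ for $P\in\{x_j^2,t_\ell\}$, expands $\nabla^{k-2}(fP)$ by Leibniz, and iterates to obtain \eqref{eq7}, which has extra intermediate terms $\int|\nabla^i f|\,p$ for $1\le i\le k-1$. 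These are then absorbed via the Poincar\'e-type inequality together with the integration by parts $\PP(\X^I f)=\int f\,\Y^I p$, where Assumption~\ref{ass2} is used exactly as you envision in your ``Gagliardo--Nirenberg'' step, namely to bound $|\Y^I p|\lesssim(1+d)^{|I|'}p$. Thus the paper uses only \emph{upper} bounds on $\nabla^j p$ and never needs any cancellation in $\nabla\ln p$, because the power of $d$ is traded not for one derivative of $p$ but for two derivatives of $f$ through a homogeneous-degree-two polynomial. If you want to repair your proof, you should abandon the geodesic vector field $V$ in favour of this polynomial/Leibniz mechanism; the rest of your scheme (regularizing $|f|$, treating the region $d\le A$ separately, and reabsorbing lower-order derivatives using $|\nabla^j p|\lesssim(1+d)^j p$) is sound and closely parallels the paper.
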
 

\begin{proof}
We only consider that $f\in C^\infty_c(\G)$, since general case can be reached via a simple limiting argument. We first prove the case $k = 1$. In fact this is a consequence of \eqref{hsg1}. More precisely, it follows from \eqref{hsg1} that 
\begin{equation*}
	\int_{\G} \left| f(g)-f_{B(o,1)} \right| (1+d(g)) \, p(g) \dif g \lsim \int_{\G}|\nabla f(g)| \,p(g) \dif g
\end{equation*}
for $f\in C^\infty_c(\G)$. Moreover, by \eqref{posi_p},
\begin{equation*}
	|f_{B(o,1)}| = \frac{1}{\vol(B(o,1))}\left|\int_{B(o,1)} f(g)\dif g \right| \lsim \int_{B(o,1)} |f(g)|\,p(g) \dif g \le \int_\G | f(g)| \, p(g)\dif g.
\end{equation*}
A combination of these  two estimates  above with the triangle inequality leads to the desired estimate for $f\in C^\infty_c(\G)$ (noticing that $\int (1+d) \, p < \infty$, since $p$ is a Schwartz function and $d$ is of polynomial growth (see \eqref{ehd})). 

Also notice that in the proof in Subsection \ref{sJ1}, fixing an $\A$ in the proof there, we have actually proved that
\begin{align*}\label{eq77}
\int_{B(o,\A)} \left| f(g)-f_{B(o,1)} \right| \,p(g) \dif g \lsim \int_{B(o,\A)} \left| f(g)-f_{B(o,1)} \right| \dif g \lsim \int_{\G}|\nabla f(g)| \, (1 + d(g))^{-1} \,p(g) \dif g.
\end{align*} 

On the other hand, using the argument for the estimate of $J_2$ in Subsection \ref{sj2}, we obtain
\begin{equation*}
	\int_{B(o,\A)^c} \left| f(g)-f_{B(o,1)} \right| \,p(g) \dif g  \lsim  \int_{\G}|\nabla f(g)|\, (1 + d(g))^{-1} \,p(g)  \dif g.
\end{equation*}
In fact, the main key points are as follows: the estimate for ``$J_{211}$'' is similar since on $B(o,\A)$ we have $1 \sim (1 + d(g'))^{-1}$; for ``$J_{212}$" part one can use $d(g')^{-1} p(g')$  in  the last bound of \eqref{low_use} instead of $p(g')$; while for ``$J_{22}$" part, the measure  $d(g')\dif g'$ in the outer integration of \eqref{mod1} now should be replaced by $\dif g'$ (notice that \eqref{mod2} is still valid). The rest of the proof can also be checked with minor modifications.

As a result we have
\begin{equation}\label{eq76}
	\int_{\G} \left| f(g)-f_{B(o,1)} \right| \,p(g) \dif g  \lsim  \int_{\G}|\nabla f(g)| \, (1 + d(g))^{-1}\,p(g) \dif g.
\end{equation}

As before, by \eqref{posi_p},
\begin{equation*}
	|f_{B(o,1)}| = \frac{1}{\vol(B(o,1))}\left|\int_{B(o,1)} f(g)\dif g \right| \lsim \int_{B(o,1)} |f(g)|\,p(g) \dif g \le 2\int_\G | f(g) | (1 + d(g))^{-1} \, p(g)\dif g.
\end{equation*}
We thus get a similar inequality:
\begin{align}\label{eq77}
\int_\G|f(g)| \,p(g) \dif g \lesssim  \int_\G|\nabla f(g)| \, (1 + d(g))^{-1} \, p(g) \dif g +   \int_\G|f(g)|\, (1+d(g))^{-1} \,p(g) \dif g.
\end{align}

Now we can use an induction argument to prove the case $k\ge2$. Suppose that \eqref{eq1} is correct for any  $m\in \{1, \ldots, k-1\}$. That is, for any  $f\in C^\infty_c(\G)$, it holds that
\begin{equation} \label{eq4}
	\int_\G |f(g)| d(g)^m \, p(g) \dif g \lsim_m \int_\G |\nabla^m f(g)|\, p(g) \dif g +\int_\G |f(g)| (1+d(g))^{m-1} p(g) \dif g.
\end{equation}
Let $P$ be any homogeneous polynomial of order $2$. Then if $k > 2$ by induction we have
\begin{align*}
\int_\G |f(g) P(g)| d(g)^{k - 2} \, p(g) \dif g &\lsim_k \int_\G  |\nabla^{k - 2} f(g)| d(g)^2 \, p(g) \dif g + \int_\G  |\nabla^{k - 3} f(g)| d(g) \, p(g) \dif g\\
&\qquad + \int_\G  |\nabla^{k - 4} f(g)|  \, p(g) \dif g + \int_\G f(g) (1 + d(g))^{k - 1} \, p(g) \dif g,
\end{align*}
where  $|\nabla^{k - 4} f(g)|$ is interpreted as $0$ when $k = 3$.  For $k = 2$, using \eqref{eq77} instead we obtain
\begin{align*}
\int_\G |f(g) P(g)|  \, p(g) \dif g \lsim \int_\G  |\nabla f(g)| d(g) \, p(g) \dif g 
+ \int_\G f(g) (1 + d(g)) \, p(g) \dif g.
\end{align*}

By choosing $P$ to be $x_j^2, 1 \le j \le q$ and $t_\ell, 1 \le \ell \le m$ in turn, adding these estimates together and applying \eqref{ehd}, we can replace $P(g)$ with $d(g)^2$ in the estimates above (up to a uniform constant). For the estimates on the right side, we iterate the argument above and conclude that:
\begin{equation}\label{eq7}
 \begin{aligned} 
	\int_\G |f(g)| d(g)^k p(g) \dif g &\lsim_k \int_\G |\nabla^k f(g)|\,p(g)\dif g + \sum_{i=1}^{k-1} \int_\G |\nabla^i f(g)|\, p(g) \dif g  \\
 &\qquad + \int_\G | f(g)| (1+d(g))^{k-1} p(g) \dif g.
\end{aligned}
\end{equation}
Regarding this iteration argument, for instance, if $k = 2$, from the argument above we have 
\begin{align}\label{ke2}
\int_\G |f(g) | d(g)^2  \, p(g) \dif g \lsim \int_\G  |\nabla f(g)| d(g) \, p(g) \dif g 
+ \int_\G f(g) (1 + d(g)) \, p(g) \dif g.
\end{align}
Next we bound $|\nabla f|$ in the first integral by $\sum_{\ell = 1}^m |\X_\ell f|$. For each $\ell$, from induction assumption we can apply \eqref{eq4} with $m = 1$ to $\X_\ell f$ and  it turns out that 
\[
\int_\G  |\X_\ell f(g)| d(g) \, p(g) \dif g \lesssim \int_\G |\nabla^2 f(g)|\, p(g) \dif g +\int_\G |\nabla f(g)|  p(g) \dif g.
\]
Inserting this into \eqref{ke2} we obtain \eqref{eq7} for $k = 2$. General cases follow in a similar way.

Compared with our target \eqref{eq1}, one sees that only the second term in the right side of \eqref{eq7} needs handling further.

To deal with these integrals, given $f\in C^\infty_c (\G)$, we put
	\begin{equation*}
		\mathbb{P}(f):= \int_\G f(g) p(g) \dif g.
	\end{equation*} 
	Then for any complex number $a$ it holds that $\mathbb{P}(|f-\PP(f)|)\le 2 \, \PP(|f-a|)$ owing to the stochastic completeness. From this and \eqref{eq76}, there is constant $C>0$ such that,
	\begin{equation} \label{eq101}
		\int_\G |f(g)- \PP(f)|\, p(g) \dif g \le C \int_\G |\nabla f(g)| \, p(g) \dif g, \quad \forall\, f \in C^\infty_c (\G).
	\end{equation}

Using \eqref{eq101} we have that, for any  $f\in C^\infty_c(\G)$ and $I$ satisfying $1\le |I|' \le k-1$,
\begin{equation*} 
	\int_\G |\X^I f(g)- \PP(\X^I f)|\, p(g) \dif g \lsim \int_\G |\nabla^{|I|'+1}f(g)|\, p(g) \dif g.
\end{equation*} 
Moreover, denoting by $\Y^{I} := (-1)^{|I|'} \, \X_{i_{|I|'}}\cdots \X_{i_1}$ the adjoint operator of $\X^I$, an integration by parts together with \eqref{assud} and Assumption \ref{ass2} indicates  that
\begin{equation*}
	\left| \PP(\X^I f)\right| = \left| \int_\G \X^I f(g) \, p(g) \dif g \right| =  \left| \int_\G f(g) \,\Y^I p(g) \dif g \right| \lsim_k \int_\G |f(g)| (1+d(g))^{|I|'} \, p(g) \dif g.
\end{equation*}
These two estimates and the triangle inequality then give 
\begin{align*}
	 \int_\G  \left|\X^I f(g) \right| \, p(g) \dif g 
	 &\le \int_\G |\X^I f (g)- \PP(\X^I f)|\, p(g) \dif g + \left| \PP(\X^I f)\right| \\
	 &\lsim_k \int_\G |\nabla^{|I|'+1}f (g)|\, p(g) \dif g + \int_\G |f(g)| (1+d(g))^{|I|'} \, p(g) \dif g.
\end{align*}
As a consequence,  for $1\le i \le k-1$, summing over $I$ where $|I|'=i$ implies that,
\begin{align*}
	\int_\G  \left|\nabla^i f(g) \right| \, p(g) \dif g 
	&\lsim_i \int_\G |\nabla^{i+1}f(g)|\, p(g) \dif g + \int_\G |f(g)| (1+d(g))^i \, p(g) \dif g.
\end{align*}
Iterating this estimate produces that,  for $1\le i \le k-1$,
\begin{align*}
	\int_\G  \left|\nabla^i f(g) \right| \, p(g) \dif g 
	&\lsim_k \int_\G |\nabla^{k}f(g)|\, p(g) \dif g + \sum_{j=i}^{k-1}\int_\G |f(g)| (1+d(g))^i \, p(g) \dif g \\
	&\lsim_k \int_\G |\nabla^{k}f(g)|\, p(g) \dif g + \int_\G |f(g)| (1+d(g))^{k - 1} \, p(g) \dif g.
\end{align*}
From this and \eqref{eq7}, it follows that \eqref{eq1} holds. Then the proof of Lemma \ref{lemr} is completed.

\end{proof}

\subsection{Proof of Theorem \ref{thm2}}\label{ss321}

As before, it suffices to prove \eqref{hsg2} for $h=1$ and $g = o$. In other words, we only have to show that, for any $I$ of length $k$ (that is, $|I|' = k$),
\begin{equation*}
    \left| \int_\G [f(g)- P_k(B(o,\B),f)(g)]\, \X^I p(g^{-1}) \dif g\right| \lsim_k \int_\G |\nabla^k f(g)| \, p(g) \dif g, \quad \forall\, f\in C_c^\infty(\G),
\end{equation*}
where $\B=\B(k)>1$ is a large constant to be chosen later. According to this, \eqref{assud} and Assumption \ref{ass2}, we are left to show that 
\begin{equation} \label{red1}
    \int_\G  \left| f(g)- P_k(B(o,\B),f)(g) \right| \,(1+d(g))^k \dif g  \lsim_k \int_\G |\nabla^k f(g)| \, p(g) \dif g, \quad \forall\, f\in C^\infty_c(\G).
\end{equation}

To see this, given $f\in C^\infty_c (\G)$, we recall that $P_k(B(o,\B),f)$ is a polynomial with homogeneous order less than $k$. Then by \eqref{eq1},
\begin{equation} \label{eq2}
\begin{aligned} 
	& \int_\G |f(g)- P_k(B(o,\B),f)(g)|\, d(g)^k p(g) \dif g 
	 \\
	 &\qquad \le C_k \int_\G|\nabla^k f(g)| \, p(g) \dif g +C_k  \int_\G|f(g)- P_k(B(o,\B),f)(g)|\, (1+d(g))^{k-1}\,p(g) \dif g.
\end{aligned}
\end{equation}
Here we note that the constant $C_k$ is independent of $\B$.
Hence \eqref{red1} can be proved if
\begin{equation} \label{eq3}
	 \int_\G|f(g)- P_k(B(o,\B),f)(g)|\, (1+d(g))^{k-1} p(g) \dif g \lsim_k \int_\G |\nabla^k f(g)| \, p(g) \dif g.
\end{equation} 

We split the last integral into the one on $B(o,\B)$ and the one on $B(o,\B)^c$. Then from \eqref{posi_p} and Lemma \ref{lem2} (\pik($k$)) it implies that 
\begin{align*}
	 &\int_{B(o,\B)} |f(g)- P_k(B(o,\B),f)(g)|\, (1+d(g))^{k-1} p(g) \dif g \\
	 &\qquad\lsim_{k,\B}  \int_{B(o,\B)}|f(g)- P_k(B(o,\B),f)(g)| \dif g \lsim_{k,\B}   \int_{B(o,\B)} |\nabla^k f(g)| \dif g \\
	&\qquad \lsim_{\B}  \int_{B(o,\B)} |\nabla^k f(g)| \, p(g) \dif g \le   \int_{\G} |\nabla^k f(g)| \, p(g) \dif g.
\end{align*}
On the other hand, we have
\begin{align*}
	&\int_{B(o,\B)^c} |f(g)- P_k(B(o,\B),f)(g)|\, (1+d(g))^{k-1} p(g) \dif g  \\
\le \, & 2^{k - 1}\int_{B(o,\B)^c} |f(g)- P_k(B(o,\B),f)(g)|\, d(g)^{k-1} p(g) \dif g  \\
\le \, & \frac{2^{k - 1}}{\B} \int_\G |f(g)- P_k(B(o,\B),f)(g)| \, d(g)^k p(g) \dif g \\
\le \, & \frac{2^{k - 1}C_k}{\B} \int_\G|\nabla^k f(g)| \, p(g) \dif g + \frac{2^{k - 1}C_k}{\B}  \int_\G|f(g)- P_k(B(o,\B),f)(g)|\, (1+d(g))^{k-1} \,p(g) \dif g,
 \end{align*}
 where in the last step we have used \eqref{eq2}. Combining the above two estimates we see that there is a constant $C'(k,\B)$ such that,
\begin{align*}
	&\int_{\G} |f(g)- P_k(B(o,\B),f)(g)|\, (1+d(g))^{k-1} p(g) \dif g  \\
\le  \, & C'(k,\B) \int_\G|\nabla^k f(g)| \, p(g) \dif g + \frac{2^{k - 1}C_k}{\B}  \int_\G|f(g)- P_k(B(o,\B),f)(g)|\, (1+d(g))^{k-1} \,p(g) \dif g.
\end{align*}
Consequently, by choosing $\B = 2^{k - 1}C_k+1$, 
\begin{equation*}
	\int_{\G} |f(g)- P_k(B(o,\B),f)(g)|\, (1+d(g))^{k-1} p(g) \dif g  \le ( 2^{k - 1}C_{k}+1) C'(k, 2^{k - 1}C_{k}+1) \int_\G|\nabla^k f(g)| \, p(g) \dif g,
\end{equation*}
which yields \eqref{eq3} and hence Theorem \ref{thm2}.

\subsection{Proof of Theorem \ref{thm3}}\label{ss322}
The proof of  Theorem \ref{thm3} is in the same spirit of \cite[Section 6]{Z23}. In fact, noting that $\T_i$ commutes with $\T_j$ and $\X_k$ for arbitrary $i,j,k$, we are required to prove that
\begin{equation} \label{eq21}
	\left|\X^{I_1}\T^{I_2} e^{h \Delta_\RR} f\right|(g) \lsim_k e^{h \Delta_\RR}(|\nabla_\RR^k f|)(g),
\end{equation}
for any $I_1,I_2$ with $|I_1|' + |I_2|' = k$.
Moreover, we can check $e^{h\Dz_{\RR}} = e^{h\Dz}e^{h\Dz_{\T}}$, $\X_i$ commutes with $e^{h\Dz_{\T}}$, and $\T_\ell$ commutes with $e^{h\Dz_{\RR}}$. Hence from Theorem \ref{thm2} we obtain 
\begin{align*}
	\left|\X^{I_1}\T^{I_2} e^{h \Delta_\RR} f\right|(g) &=  |\X^{I_1}e^{h\Dz}\left(e^{h\Dz_{\T}}\T^{I_2}f\right)|(g) \\
	&\lsim_k e^{h\Dz} \left( |\nabla^{k_1} e^{h\Dz_{\T}}\T^{I_2}f | \right) (g) \lsim_k e^{h\Dz}\left( e^{h\Dz_{\T}} (| \nabla^{k_1} \T^{I_2}f |) \right) (g) \\
	 &\lsim_k  e^{h\Dz}\left( e^{h\Dz_{\T}} (| \nabla^{k}_\RR f |) \right) (g) = e^{h\Dz_{\RR}}\left( | \nabla^{k}_\RR f | \right) (g).
\end{align*}	
This completes the proof of \eqref{eq21}.

\section{Examples and applications of the main results} \label{s4}
In this Section we furnish some examples which satisfy Assumptions \ref{ass} and \ref{ass2}, including Heisenberg groups, H-type groups and the free step-two Carnot group with three generators. As a result, we obtain Theorems \ref{thm1}-\ref{thm3} on those groups.

\subsection{The Heisenberg group $\H^n$} \label{s41}

Recall that the Heisenberg group $\H^n \cong \R^{2n} \times \R$ is defined by the following group multiplication:
\[
(x,t) \cdot (x',t') = \left(x + x', t + t' + \frac{1}{2} \sum_{j = 1}^n  (x_{2j - 1}x_{2j}' - x_{2j} x_{2j - 1}' ) \right).
\]

Now we fix an $n \in \N^*$. Since Assumption \ref{ass2} follows from \cite[Theorem 2]{Li07}, we are left with Assumption \ref{ass}, and by Proposition \ref{HCP} (ii) we only need to prove that \eqref{mcpcheck} holds for some $N$ with a universal constant $C$. In fact, it yields from the computation in \cite{J09} that $\mathcal{D} = (\R^{2n} \setminus \{0\}) \times (-2\pi, 2\pi)$ and if we write $\eta = (\zeta,\tau) = (\zeta, 2 \theta) \in \mathcal{D}$, then
\[
\jac(\exp)(\zeta, 2 \theta) \sim |\zeta|^2 \left( \frac{\sin{\theta}}{\theta} \right)^{2n - 1}, 
 \qquad |x| = \frac{\sin{\theta}}{\theta} |\zeta|.
\]
Combining this with the precise bounds of the heat kernel (see \cite[Theorem 1]{Li07})  
\begin{align}\label{pulH}
p(x,t) \sim \frac{(1 + d(g))^{2n - 2}}{(1 + |x|d(g))^{n - \frac{1}{2}}} \, e^{-\frac{d(g)^2}{4}},
\end{align}
we obtain 
\[
H(\zeta, 2 \theta) \, \jac(\exp)(\zeta, 2 \theta)\sim \begin{cases} \left( \frac{\sin{\theta}}{\theta} \right)^{2n - 1}  |\zeta|^2 , & \text { if } |\zeta| \lesssim 1,
\\ 
\left( \frac{\sin{\theta}}{\theta} \right)^{n - \frac{1}{2}} |\zeta|, & \text { if } |\zeta| \gg 1, \frac{\sin{\theta}}{\theta} |\zeta|^2 \gg 1, \\
\left( \frac{\sin{\theta}}{\theta} \right)^{2n - 1} |\zeta|^{2n}, & \text { if } |\zeta| \gg 1, \frac{\sin{\theta}}{\theta} |\zeta|^2 \lesssim 1.
\end{cases}
\]
Noticing that the function $s \mapsto \frac{\sin{s}}{s}$ is an even function on $(-\pi,\pi)$ and strictly decreasing on $[0,\pi)$, it is not hard to check \eqref{mcpcheck} holds uniformly on $(0,1]$, with $N=4n+1$ by the formulae above.

\subsection{The H-type group $\H(2n,m)$} \label{s43}

If the matrices $\U = \{U^{(1)},\ldots,U^{(m)}\}$ appearing in \eqref{gst} are skew-symmetric and orthogonal matrices satisfying $ U^{ (i) } U^{ (j) } = -U^{ (j) } U^{ (i) }$ for all $1 \le  i \neq j \le m$, then we call the group    H-type group. It turns out that $q = 2n$ and $m < \rho(2n)$, where the Hurwitz--Radon function $\rho$ is defined by
	\begin{align*}
		\rho(2n) := 8 k + 2^l, \quad \mbox{with}
		\quad 2n = \mbox{(odd)} \cdot 2^{4k + l}, \ 0 \leq l \leq 3, \ k = 0, 1, \ldots,
	\end{align*}
	which implies $2n \ge m + 1$. See for example \cite{K80} for more details. In this work we use $\mathbb{H}(2n,m)$ to denote such H-type group.

Fix $n,m \in \N^*$. Since Assumption \ref{ass2} follows from \cite[Theorem 1.2]{Li10}, we are left with Assumption \ref{ass}, and by Proposition \ref{HCP} (ii) we only need to prove that \eqref{mcpcheck} holds for some $N$ with a universal constant $C$. 
In fact, it yields from the computation in \cite{BR18} that $\mathcal{D} = (\R^{2n} \setminus \{0\}) \times B_{\R^m}(0,2\pi)$ and if we write $\eta = (\zeta,\tau) = (\zeta, 2 \theta) \in \mathcal{D}$, then
\[
\jac(\exp)(\zeta, 2 \theta) \sim |\zeta|^{2m} \left( \frac{\sin{|\theta|}}{|\theta|} \right)^{2n - 1},
 \qquad |x| = \frac{\sin{|\theta|}}{|\theta|} |\zeta|.
\]
Combining this with the precise bounds of the heat kernel (see \cite[Theorem 1.1]{Li10} or \cite[Theorem 4.2]{E09})  
\begin{align}\label{pulHt}
p(x,t) \sim \frac{(1 + d(g))^{2n -m - 1}}{(1 + |x|d(g))^{n - \frac{1}{2}}} \, e^{-\frac{d(g)^2}{4}},
\end{align}
we have
\[
H(\zeta, 2 \theta) \, \jac(\exp)(\zeta, 2 \theta)\sim \begin{cases} \left( \frac{\sin{|\theta|}}{|\theta|} \right)^{2n - 1}  |\zeta|^{2m} , & \text { if } |\zeta| \lesssim 1,
\\ 
\left( \frac{\sin{|\theta|}}{|\theta|} \right)^{n - \frac{1}{2}} |\zeta|^m, & \text { if } |\zeta| \gg 1, \frac{\sin{|\theta|}}{|\theta|} |\zeta|^2 \gg 1, \\
\left( \frac{\sin{|\theta|}}{|\theta|} \right)^{2n - 1} |\zeta|^{2n + m - 1}, & \text { if } |\zeta| \gg 1, \frac{\sin{|\theta|}}{|\theta|} |\zeta|^2 \lesssim 1.
\end{cases}
\]
Similar to the Heisenberg group case, we can check \eqref{mcpcheck} holds uniformly on $(0,1]$, with $N=4n + 2m - 1$ by the formulae above.

\subsection{The non-isotropic Heisenberg group and generalized H-type group} \label{s44}

Using the arguments in \cite[\S~7.2, 7.4]{Z23} we know that \eqref{hsge} holds on non-isotropic Heisenberg groups and generalized H-type groups. 

In fact, one can check that Assumptions \ref{ass} and \ref{ass2} are both valid on those groups. We refer the interested reader to the forthcoming paper \cite{LZ22} for more details.

\subsection{The free step-two Carnot group with three generators $N_{3,2}$ } \label{s42}

The free step-two Carnot group with three generators $N_{3,2}$ is $\rr^3 \times \rr^3$ with the following group structure

\[
(x,t) \cdot (x',t') = \left(x + x' , t + t' - \frac{1}{2} \, x \times x' \right),
\]
where ``$\times$'' represents the cross product on $\rr^3$, i.e.,
\[
x \times x' = (x_2 x_3' - x_3 x_2', x_3 x_1' - x_1 x_3', x_1 x_2' - x_2 x_1').
\]

Assumption \ref{ass2} follows from \cite[Theorem 12.1]{LMZ23}. To show that Assumption \ref{ass} holds on $N_{3,2}$, we need to first recall some known results.

From \cite[Corollary 16]{LZ21} it follows that
$$\ss=\{(x,t); \, x \,\, \mbox{and}\,\, t \,\,\mbox{are linearly independent}\}=\left\{(x,t); |x|^2|t|^2\neq (x\cdot t)^2\right\}.$$ 
The heat kernel and the Carnot--Carath\'eodory distance satisfy the following symmetry properties: 
\begin{gather*}
	p(O \, x, O \, t) = p(x,t), \quad p(x,t) = p(x, -t), \qquad \forall \, O \in \mathrm{O}_3, (x,t)\in\ntt,\\
d(O \, x, O \, t) = d(x,t), \quad d(x,t) = d(x, -t), \qquad \forall \, O \in \mathrm{O}_3, (x,t)\in\ntt,
\end{gather*}
where $\mathrm{O}_3$ represents the orthogonal group of order 3. Hence $p(x,t) = p(|x|e_1,(T_1,T_2,0))$, for any $(x,t)\in\ntt$, where
\begin{equation*}
	T_1=T_1(x,t): = |t|\,\hat{x}\cdot\hat{t}, \quad	T_2= T_2(x,t):=|t|\sqrt{1-(\hat{x}\cdot\hat{t})^2}.
\end{equation*}
From these facts and the sharp estimates of the heat kernel on $N_{3,2}$ (see \cite[Theorem 2.6]{LMZ23}), together with \cite[Lemma 11]{LZ21}, one can appeal to a simple limiting argument  and obtain the following
\begin{lem} \label{lem3}
For any $g=(x,t)\in\ss$, we put
\begin{equation*}
(\zeta,2\tz)=\exp^{-1}(g) \in \mathcal{D}, \quad	\ep=\ep(g):=\vtz_1-|\tz|>0, \quad \m=\m(g) := \frac14[d(g)^2 - |x|^2] > 0.
\end{equation*}
Then
	\begin{equation} \label{pbnd1}
		\begin{aligned}
			p(x,t) \sim (1+d(g))^{-2}\frac{1+\ep  \, d(g)}{1+\ep \,  d(g) + \ep  \, T_2^{\frac12} \, |x|^{\frac12} \,\m^{\frac14}} \,
			 e^{-\frac{d(g)^2}{4}}.
		\end{aligned}
	\end{equation}
\end{lem}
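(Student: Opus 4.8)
The plan is to obtain \eqref{pbnd1} by specializing the sharp two-sided heat kernel bound of \cite[Theorem 2.6]{LMZ23}, rewriting the geometric quantities occurring there in terms of $d(g)$, $\ep$, $\m$, $T_2$ and $|x|$ via \cite[Lemma 11]{LZ21}, and then passing from a generic subset of $\ss$ to all of $\ss$ by continuity. The first step is a normal-form reduction: by the symmetry properties of $p$ and $d$ recalled above one has $p(x,t) = p(|x| e_1, (T_1, T_2, 0))$ for every $(x,t) \in \ntt$, so it suffices to prove \eqref{pbnd1} for $g = (|x| e_1, (T_1, T_2, 0))$. This reduction is harmless for the right-hand side of \eqref{pbnd1}, since that side depends on $g$ only through the $\mathrm{O}_3$- and reflection-invariant quantities $d(g)$, $|x|$, $T_2$, $\ep$ and $\m$: indeed $d(g) = |\zeta|$ by Proposition \ref{propgeo}, and $\exp^{-1}$ intertwines the diagonal $\mathrm{O}_3$-action and the map $t \mapsto -t$, so $\ep = \vtz_1 - |\tz|$ and $\m = \tfrac14(d(g)^2 - |x|^2)$ are invariant as well. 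Note also that on $\ss$ one has $t \neq 0$ and $\hat x \cdot \hat t \neq \pm 1$, hence $T_2 > 0$, and $(\zeta, 2\tz) = \exp^{-1}(g) \in \mathcal{D}$, i.e. $g$ lies off the cut locus $\mathrm{Cut}_o$; this is what forces the strict positivity $\ep > 0$ and $\m > 0$ asserted in the statement.

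For the second step I would carry out the substitution. The bound in \cite[Theorem 2.6]{LMZ23} writes $p(g)$ as $e^{-d(g)^2/4}$ times an algebraic prefactor assembled from $d(g)$, $|x|$ and second-variation (Jacobian/Hessian) data of the sub-Riemannian exponential map along the minimizing geodesic from $o$ to $g$; feeding in the explicit dictionary of \cite[Lemma 11]{LZ21} between $(\zeta, \tz, \vtz_1)$ and $(|x|, T_1, T_2, d(g))$ turns this prefactor into $(1 + d(g))^{-2}$ times $\frac{1 + \ep\, d(g)}{1 + \ep\, d(g) + \ep\, T_2^{1/2} |x|^{1/2} \m^{1/4}}$, up to a multiplicative constant independent of $g$; the equivalence $d(x,t)^2 \sim |x|^2 + |t|$ from \eqref{ehd} is convenient here to control $\m$ and $T_2$. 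Since \cite[Theorem 2.6]{LMZ23} and this dictionary are most transparently available on an open dense subset of $\ss$ on which the relevant non-degeneracy is manifest, and both sides of \eqref{pbnd1} are continuous and strictly positive on $\ss$ — the denominator being at least $1 + \ep\, d(g) > 1$ for $g \ne o$ — a short limiting argument then propagates the equivalence to all of $\ss$.

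The step I expect to be the real work is the bookkeeping inside this substitution: one must verify that every auxiliary factor in the sharp bound of \cite[Theorem 2.6]{LMZ23} is, uniformly over $g \in \ss$, comparable to the explicit combination of $\ep$, $d(g)$, $T_2$, $|x|$ and $\m$ displayed on the right of \eqref{pbnd1}, and it is precisely for this translation that \cite[Lemma 11]{LZ21} is invoked. The remaining points are comparatively routine: confirming the positivity assertions $\ep > 0$ and $\m > 0$ on $\ss$ from the fact that $(\zeta, 2\tz)$ lies in the open domain $\mathcal{D}$, and checking that the normal-form reduction genuinely does not affect \eqref{pbnd1}.
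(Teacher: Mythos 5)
Your proposal follows exactly the route the paper takes: reduce by the $\mathrm{O}_3$- and reflection-symmetries, invoke the sharp two-sided heat kernel bound of \cite[Theorem 2.6]{LMZ23} and the translation dictionary \cite[Lemma 11]{LZ21}, and close with a limiting argument from a dense subset of $\ss$ to all of $\ss$. The paper in fact gives even less detail than you do — its ``proof'' is the single sentence preceding the statement — so your elaboration of why the normal form is harmless and why $\ep, \m > 0$ on $\ss$ is consistent with and fills out what the authors leave implicit.
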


Now we claim that \eqref{assmcp} holds uniformly on $[0,1]$  with $N=N_0 +2$, where $N_0\ge 14$ is a constant such that the property $\mathrm{MCP}(0,N_0)$ holds on $\ntt$ by Theorem \ref{tmcp}.
Note that the smallest $N_0$ (i.e., the curvature exponent) must be larger than the geodesic dimension, which is $14$ for $\ntt$. See \cite[Proposition 7.3]{GZ24} for more details.

For our purpose we need the following fact concerning the small-time heat kernel asymptotics, which indeed provides a shortcut to derive the precise bounds for the Jacobian determinant of the sub-Riemannian exponential map from the precise estimates of the heat kernel. In fact, this result is hidden in \cite{BA88} and we are informed by the author. For the sake of completeness, we will include a detailed proof in Appendix \ref{app2}. To be more precise, we will also use the small-time heat kernel asymptotics in \cite[\S~4.2]{Li21} and \cite[Theorem 2.4]{Li21} for $g$ in a subset of $\mathcal{S}$ and a midpoint argument (see for example \cite{BBN12}) for general points in $\mathcal{S}$.

\begin{lem} \label{sth}
Let $\G$ be a step-two Carnot group. For any $g = (x,t) \in \ss$ we have 
    $$
    p_h(x, t)=C(\G) \, h^{-\frac{q + m}{2}}\jac(\exp)(\zeta, \tau)^{-\frac{1}{2}} e^{-\frac{d(g)^2}{4 h}} \, (1+o_g(1)),\quad {\rm as}\,\, h \to 0^+,
    $$
   where $(\zeta, \tau) = \exp^{-1}(x,t) \in \mathcal{D}$ and  $C(\G)$ is a constant depending only on $\G$.
\end{lem}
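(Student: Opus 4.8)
The plan is to split the argument into two parts: first establish the shape of the expansion together with the existence of a positive leading coefficient, and then identify that coefficient with $\jac(\exp)^{-\frac12}$. For the first part I would appeal to Ben Arous' short-time asymptotics for hypoelliptic heat kernels \emph{off the cut locus} \cite{BA88}. Fix $g\in\ss$; by Proposition \ref{propgeo} the points $o$ and $g$ are joined by the unique minimizing geodesic $s\mapsto\exp\{s\,\exp^{-1}(g)\}$, which is the projection of a normal extremal and is non-conjugate (non-conjugacy being exactly the statement that $\exp$ is a diffeomorphism from $\mathcal{D}$ onto $\ss$). These are precisely the hypotheses under which Ben Arous' theorem applies, and it yields a complete asymptotic expansion
\[
p_h(g) = h^{-\frac{q+m}{2}}\, e^{-\frac{d(g)^2}{4h}}\bigl(\Theta(g) + O_g(h)\bigr), \qquad h\to 0^+,
\]
with $\Theta(g)>0$ depending smoothly on $g\in\ss$. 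Here the exponent is the topological dimension $q+m=\dim\G$, replacing the homogeneous dimension $Q$ that governs the on-diagonal behaviour; only the leading term is needed.

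For the second part there are two avenues. Ben Arous' formula expresses $\Theta(g)$ as a dimensional constant times the reciprocal square root of a Van Vleck--Morette--type determinant attached to the minimizing geodesic, i.e.\ the Hessian determinant of the fixed-endpoint energy functional, which is the Jacobian of the endpoint map; since the sub-Riemannian exponential map $\exp$ \emph{is} this endpoint map read in the covector chart $\mathcal{D}$, one matches the two determinants and obtains $\Theta(g)=C(\G)\,\jac(\exp)(\zeta,\tau)^{-\frac12}$, which is the assertion. Alternatively — the route indicated in the text — one first proves the formula on a relatively open subset $\ss_0\subset\ss$ by applying Laplace's method to the oscillatory-integral representation of $p_h$ on a step-two Carnot group (cf.\ \cite{Li21}), where $\jac(\exp)$ appears directly as the Hessian determinant of the phase at its critical point; one then propagates the asymptotic from $\ss_0$ to all of $\ss$ via the Chapman--Kolmogorov identity $p_{2h}(g)=\int_\G p_h(k)\,p_h(k^{-1}\cdot g)\,\dif k$ and a Laplace expansion localizing at the midpoint of the minimizing geodesic (cf.\ \cite{BBN12}). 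In either avenue $C(\G)$ is seen to be independent of $g$: it is fixed once and for all on $\ss_0$ (or by matching against a single computation) and is preserved under the midpoint propagation.

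A convenient consistency check, which also furnishes a third route, is the exact homogeneity $p_h(g)=h^{-\frac{Q}{2}}p_1(\delta_{1/\sqrt h}(g))$ together with $d(\delta_r g)=r\,d(g)$: these turn the claimed identity into a large-distance asymptotic for $p_1$ along dilation orbits and force a definite scaling behaviour of $\jac(\exp)$ under $\delta_r$, after which the bookkeeping of the powers of $h$ is an immediate verification (for the Heisenberg and H-type groups it reproduces the coefficients in \eqref{pulH} and \eqref{pulHt}); if one prefers, this lets one deduce Lemma \ref{sth} from known large-distance heat-kernel asymptotics instead.

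The main obstacle is the coefficient identification in the second part: matching Ben Arous' abstract Van Vleck--Morette determinant — the second variation of the action, equivalently the linearization of the normal Hamiltonian flow — with the concrete $\jac(\exp)$, with the correct universal constant, and, if one follows the \cite{Li21}/\cite{BBN12} route, carrying out the midpoint Laplace argument uniformly in $g\in\ss$ (the midpoint of a minimizer need not lie in the convenient subset $\ss_0$, so one iterates the localization or closes the argument by continuity of both sides on $\ss$). By contrast the existence and shape of the expansion, and the non-degeneracy hypotheses it requires, are supplied directly by Ben Arous' theorem and Proposition \ref{propgeo}.
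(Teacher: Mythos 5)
Your proposal matches the paper's indicated approach: the paper does not itself prove this lemma but cites Ben Arous \cite{BA88} (noting the result is ``hidden'' there, as communicated by the author) and points to the alternative derivation from \cite[\S~4.2, Theorem 2.4]{Li21} combined with a midpoint argument as in \cite{BBN12}, deferring details to a forthcoming paper. Both avenues you describe, the non-conjugacy/uniqueness hypothesis drawn from Proposition \ref{propgeo}, and the technical gaps you flag (matching the Van Vleck--Morette determinant with $\jac(\exp)$, and running the midpoint Laplace argument uniformly on $\ss$) correspond exactly to what the paper invokes and leaves implicit.
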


Here and subsequently we will always suppose that $g=(x,t)=\exp(\zeta,2\tz) \in\ss$. Then from Lemma \ref{lem3} (together with \cite[(2.9)]{LZ21}),
\begin{align*}
	p_h(x,t) &= h^{-\frac92} p(h^{-\frac12}|x|e_1, h^{-1}(T_1,T_2,0)) \\ &\sim  h^{-\frac92} (1+h^{-\frac12}d) ^{-2} \frac{1+h^{-\frac12} \ep d} {1+ h^{-\frac12} \ep d  + h^{-1} \ep  \, T_2^{\frac12} \, |x|^{\frac12} \,\m^{\frac14}} e^{-\frac{d^2}{4h}}.
\end{align*}
Now combining this with Lemma \ref{sth}, it implies that
\[
C(\G) \, \jac(\exp)(\zeta, 2\theta)^{-\frac{1}{2}} (1 + o_g(1)) = p_h(x,t) e^{\frac{d(x,t)^2}{4h}} h^3 \sim (h^\frac12 + d)^{-2} \frac{h^\frac12 + \ep d}{h + h^\frac12 \ep d + \ep  T_2^{\frac12} \, |x|^{\frac12}\,\m^{\frac14}}.
\]
Letting $h \to 0^+$ we have
\begin{equation}\label{jac_exp}
	\jac(\exp)(\zeta,2\theta) \sim d(g)^2\, T_2(g) \,|x|\,\m(g)^\frac12 =: d^2 \,\aa, \quad\forall\, g =\exp(\zeta,2\tz) \in\ss,
\end{equation}
where $\aa=\aa(g):=T_2(g) \,|x|\,\m(g)^\frac12$. For brevity we also write for any $s\in(0,1]$,
\begin{gather*}
g_s:=\exp(s(\zeta,2\tz)), \quad d_s := d(g_s)=s\,d, \quad \aa_s :=\aa(g_s), \quad\ep_s := \ep(g_s) =\vtz_1 - s|\tz|.
\end{gather*}

Now by \eqref{pbnd1} and \eqref{jac_exp}, together with Proposition \ref{HCP} (ii), for our goal it suffices to prove that 
\begin{equation} \label{ine1}
    \frac{H(\zeta, 2 \theta) \, \jac(\exp)(\zeta, 2 \theta)}{H(s(\zeta, 2 \theta)) \, \jac(\exp)(s(\zeta, 2 \theta))} \sim \frac{(1+\ep d)(1+\ep_s d_s +\ep_s\aa_s^\frac12)(1+d_s^2)\aa \, d^2}{(1+\ep_s d_s)(1+\ep d +\ep\,\aa^\frac12)(1+d^2)\aa_s \, d_s^2} \lsim s^{4-N_0}
\end{equation}
holds for all $g =\exp(\zeta,2\tz) \in\ss$. Since $d_s=s\,d\le d$, then \eqref{ine1} will hold provided that 
\begin{equation} \label{ine2}
   (1+\ep d)(1+\ep_s d_s +\ep_s\aa_s^\frac12)\aa \lsim (1+\ep_s d_s)(1+\ep d +\ep\,\aa^\frac12)\widetilde{\aa_s},
\end{equation}
where $\widetilde{\aa_s}:=s^{6-N_0} \,\aa_s$. 

Notice that by $\mathrm{MCP}(0,N_0)$, Proposition \ref{HCP} (i) and \eqref{jac_exp} we have
$s^{N_0-6} d^2 \,\aa\lsim s^2 d^2\,\aa_s$, that is, $\aa\lsim s^{8-N_0}\aa_s$. In conclusion, we obtain
\begin{equation} \label{ine3}
    \aa\lsim s^2\,\widetilde{\aa_s},\qquad \aa_s\le \widetilde{\aa_s}.
\end{equation}
Using this and the trivial fact $\ep\le\ep_s$, it is easily seen that
\[
\left( \frac{1}{\ep_s} + d_s + \aa_s^\frac12 \right) \aa \lesssim s \left( \frac{1}{\ep} + d + \aa^\frac12 \right) \widetilde{\aa_s},
\]
which implies
\begin{equation*}
    \ep d(1+\ep_s d_s +\ep_s\aa_s^\frac12)\aa \lsim \ep_s d_s(1+\ep d +\ep\,\aa^\frac12)\widetilde{\aa_s}.
\end{equation*}
As a result, to get \eqref{ine2} we only need to show that
\begin{equation*}
    (1+\ep_s d_s +\ep_s\aa_s^\frac12)\aa \lsim (1+\ep_s d_s)(1+\ep d +\ep\,\aa^\frac12)\widetilde{\aa_s}.
\end{equation*}
But this follows from the estimates (by \eqref{ine3})
\begin{gather}
(1+\ep_s d_s) \aa \lsim (1+\ep_s d_s) s^2\, \widetilde{\aa_s} \le (1+\ep_s d_s)  \widetilde{\aa_s},\nonumber\\
\ep_s \aa_s^\frac12 \aa \le \ep_s \widetilde{\aa_s}^\frac12  \aa^\frac12 \aa^\frac12 \lesssim \ep_s \widetilde{\aa_s}  s\, \ep\, d^2 = \ep_s \, d_s \,\ep\, d \, \widetilde{\aa_s},\label{ine4}
\end{gather}
where in ``$\lsim$" of \eqref{ine4} we have also used that $\aa\lsim\ep^2d^4$, which, in fact, is a simple consequence of \cite[Lemma 7.6 (ii)]{LMZ23}.

\begin{appendices}
\section{Proof of Lemma \ref{geh}} \label{app}

Finally, we explain how to prove Lemma \ref{geh}, which is in fact contained in  \cite[Theorem 3.1]{Q15} if one can get rid of the finite measure assumption. For the reader's convenience, throughout this appendix we will work in the framework of \cite[p. 609]{Q15} with the same notations therein.

Notice that the generalized curvature-dimension inequality $\mathrm{CD}(\rho_1,\rho_2,k,d)$ for $L$ satisfying $\rho_1>0$ will imply that the invariant measure is finite (and even more that the underlying manifold $M$ must be compact; see \cite[Theorem 10.1]{BG17}). This obviously will not happen in our situation of step-two Carnot groups. However, one can show that \cite[Theorem 3.1]{Q15} still holds without the finite measure assumption as the following theorem says.

\begin{theo} \label{fint}
Suppose that the operator $L$ obeys the   generalized curvature-dimension inequality $\mathrm{CD}(\rho_1,\rho_2,k,d)$ with $\rho_1\ge0,\,\rho_2>0,\,k\ge0,\,d<\infty$. Then there is a positive constant $C=C(\rho_2,k,d)$ such that 
\begin{equation} \label{ff01}
    \Gamma_y(\ln p(x, y, t))+\rho_2\, t\, \Gamma_y^Z(\ln p(x, y, t)) \leq \frac{C}{t}\left(1+\frac{d(x, y)^2}{t}\right), \quad \forall\, t>0, x, y \in M.
\end{equation}
\end{theo}

\begin{proof}
The proof is similar to  \cite[Theorem 3.1]{Q15} with minor modifications. Recall that we still have the doubling volume property and two-sided bounds for the heat kernel as formulated in \cite[\S~3]{Q15}.

Fix $x\in M$ and $t>0$. Let $f:=p(x,\cdot,t)\in C_b^\infty(M)$. By the upper bound in \cite[Eq. (3.1)]{Q15} we obtain
\begin{equation*}
    0<f(y)\le \frac{C_2(\varepsilon)}{\mu(B(x,\sqrt{t}))} =: A(x,t)=A, \quad \forall\, y\in M,
\end{equation*}
where the choice $\varepsilon=1/2$ is enough for our purpose. 
Moreover, from the semigroup property of $P_t=e^{tL}$ and the upper bound in \cite[Eq. (3.1)]{Q15} it follows that
\begin{align*}
    P_t(f)(y)&=\int_M p(y,z,t)\,p(x,z,t) \dif\mu(z) \\
    &=p(x,y,2t) \ge \frac{C_2^{-1}(\varepsilon)}{\mu(B(x, \sqrt{2t}))} \exp \left(-\frac{\left(1+\frac{3 k}{2\rho_2}\right) d(x, y)^2}{2 (4-\varepsilon) t}\right).
\end{align*}

As a result,  substituting $f$ into  \cite[Eq. (2.2)]{Q15} (together with the fact that $\mathrm{CD}(\rho_1,\rho_2,k,d) \Rightarrow \mathrm{CD}(\rho_1,\rho_2,k,\infty)$ by definition) yields that
\begin{align*}
    &\frac{\rho_2 \,t}{\rho_2+2 k} \Gamma_y\left(\ln p(x,y,2t) \right) +\frac{\rho_2^2 \, t^2}{\rho_2+2 k} \Gamma^Z_y\left(\ln p(x,y,2t)\right) \leq \ln \frac{A}{P_t f(y)} \\
    &\qquad \le \ln\left[\frac{C_2^{2}(\varepsilon) \mu(B(x, \sqrt{2t}))}{\mu(B(x, \sqrt{t}))} \exp \left(\frac{\left(1+\frac{3 k}{2\rho_2}\right) d(x, y)^2}{2 (4-\varepsilon) t}\right) \right] \\
    &\qquad\le \ln(2^{Q/2}C_1 C_2^{2}(\varepsilon)) + \frac{1+\frac{3 k}{2\rho_2} }{ 4-\varepsilon} \frac{d(x, y)^2}{2t},
\end{align*}
where in the last ``$\le$" we have used the  doubling property (the inequality above \cite[Eq. (3.1)]{Q15}). In view this we get \eqref{ff01} since $t$ is arbitrary.
\end{proof}

Now Lemma \ref{geh} follows immediately as we have mentioned earlier.

\section{Proof of Lemma \ref{sth}} \label{app2}

In this appendix we prove Lemma \ref{sth}. Let us first recall what has been obtained in \cite{BA88} as  follows:
\begin{theo}[Theorem 3.1 of \cite{BA88}]\label{TBA88}
Let $\G$ be a step-two Carnot group. Given any compact set $K \subset \mathcal{S}$,  we have
\[
p_h(x,t) = \mathfrak{p}(x,t) h^{-\frac{q + m}{2}} \, e^{-\frac{d(g)^2}{4h}} \, (1 + O_K(h))
\]
as $h \to 0^+$ uniformly for  all $g \in K$. Here $\mathfrak{p} > 0$ is some smooth function
defined on $\mathcal{S}$.
\end{theo}

Then we recall the results in \cite{BBN12}, in which a midpoint argument is used. The basic idea of the midpoint argument is that we can re-write the heat kernel as the convolution of two heat kernels:
\[
p_h(g) = \int_\G p_{\frac{h}{2}}(g_*) p_{\frac{h}{2}}(g_*^{-1} \cdot g)  \mathrm{d}g_*.
\]
It turns out the the main contribution of this integral is near the midpoint of $o$ and $g$, and thus we can use the expansion in Theorem \ref{TBA88} to each heat kernel inside and ``glue together'' two pieces again by the integral.  We collect the results we need in the following proposition.



\begin{prop}\label{keyPro}
Suppose that $\G$ is a step-two Carnot group $\G$ and $g \in \mathcal{S}$.  
\begin{enumerate}[(i)]
\item (Lemma 21 of \cite{BBN12}) The hinged energy function
\begin{align*} 
G_g(g_*) := \frac{1}{2}(d(g_*)^2 + d(g_*^{-1} \cdot g)^2),\qquad \forall \, g_* \in \G
\end{align*}
attains its minimum $\frac{d(g)^2}{4}$ exactly at the point $g_\mathrm{mid} :=  \exp \left( \frac{1}{2} \exp^{-1}(g) \right)$,  the midpoint of the shortest geodesic from $o$ to $g$;

\item If $\exp(\zeta,\tau)=g$, then we have $g_\mathrm{mid}, g_\mathrm{mid}^{-1} \cdot g \in \ss$ and furthermore
\begin{equation} \label{ff02}
    \exp^{-1}(g_\mathrm{mid}) = \left(\frac{\zeta}2,\frac{\tau}2\right), \qquad \exp^{-1}(g_\mathrm{mid}^{-1} \cdot g) = \left(\frac12 e^{\tilde{U}(\tau/2)}\zeta,\frac{\tau}2\right);
\end{equation}

\item (Theorem 24 of \cite{BBN12}) The Hessian matrix of $G_g$ at the point $g_\mathrm{mid}$ is nondegenerate and, moreover, by (i)-(ii) we have
\[ \mathrm{Hess}(G_g)(g_\mathrm{mid}) > 0; \]

\item (The first note after the proof of Theorem 27 on \cite[pp. 400]{BBN12}) It holds that
\end{enumerate} 
\begin{equation} \label{ff03}
 \mathfrak{p}(g) =  \mathfrak{p}(g_\mathrm{mid})  \mathfrak{p}(g_\mathrm{mid}^{-1} \cdot g) \frac{(8 \pi)^{\frac{q + m}{2}}}{\sqrt{\det(\mathrm{Hess}(G_g)(g_\mathrm{mid}))}}.
\end{equation}
\end{prop}

\begin{proof}
We only need to prove (ii). In fact, the fist equality in \eqref{ff02} follows from the definition of the midpoint and Proposition \ref{propgeo}. For the second one, noting  that we have $g_*^{-1}\in\ss$ if $g_* \in \ss$ by \eqref{lid2}, then from \eqref{symExp} we see that it suffices to prove $g^{-1} \cdot g_\mathrm{mid} = \exp\left(\frac12\left(- e^{\widetilde{U}(\tau)} \, \zeta, - \tau\right)\right) \in\ss$. In fact, since $\gamma(s) := \exp\{s(\zeta,\tau)\}$ is the shortest geodesic joining $o$ to $g$, then the curve $g^{-1} \cdot \gamma(1-s)$ is the shortest geodesic joining $o$ to $g^{-1}$, which, however, should also be $\exp\{s(- e^{\widetilde{U}(\tau)} \, \zeta, - \tau)\}$ by \eqref{symExp}. Since $g^{-1} \in \mathcal{S}$, the two shortest geodesics must coincide. From this with the choice that $s = 1/2$,  and Proposition \ref{propgeo}, the desired result follows.

\end{proof}

\subsection{Proof of Lemma \ref{sth}}

Following the notation in \cite{Li21}, we define $U(\tau) := i \widetilde{U}(\tau)$ and $\Omega_* := \{\tau; \, \|U(\tau)\| < \pi\}$, where $\|\cdot\|$ is the operator norm. It follows from \cite[Theorem 2.4]{Li21} that if $\tau \in 2\Omega_*$, then the sub-Riemannian exponential map is a composition of $\mathcal{E}_1$ and $\mathcal{E}_2$, that is, $\exp(\zeta,  \tau) = \mathcal{E}_2 \circ \mathcal{E}_1(\zeta,  \tau)$, where
\begin{align*}
\mathcal{E}_1: \R^q \times 2\Omega_* &\longrightarrow \R^q \times \Omega_* \\
(\zeta, \tau) &\longmapsto \left( \frac{\sin{U(\tau/2)}}{U(\tau/2)} e^{\widetilde{U}(\tau/2)} \, \zeta  , \tau/2 \right),
\end{align*}
and 
\begin{align*}
\mathcal{E}_2: \R^q \times \Omega_* & \longrightarrow \R^q \times \R^m \\
(x, \tau) & \longmapsto \left( x, -\frac{1}{4} \, \nabla_\tau \, \langle U(\tau) \cot{U(\tau)} \, x, \, x \rangle \right).
\end{align*}
Here $\nabla_\tau$ is the usual Euclidean gradient w.r.t. $\tau$ variable.  Taking Jacobian determinant on $\exp = \mathcal{E}_2 \circ \mathcal{E}_1$, we can re-write the small-time heat kernel asymptotics in \cite[\S~4.2]{Li21} as 
$$
    p_h(x, t)=C(\G) \, h^{-\frac{q + m}{2}}\jac(\exp)(\zeta, \tau)^{-\frac{1}{2}} e^{-\frac{d(g)^2}{4 h}} \, (1+o_g(1)),\quad {\rm as}\,\, h \to 0^+,
    $$
for some positive constant $C(\G) > 0$ and $(\zeta, \tau) = \exp^{-1}(x,t) \in \mathcal{D} \cap (\R^q \times 2\Omega_*)$. In other words, we know that for $g \in \exp(\mathcal{D} \cap (\R^q \times 2\Omega_*))$ ,
\[
\mathfrak{p}(g) = C(\G) \jac(\exp)(\exp^{-1}(g))^{-\frac{1}{2}},
\]
which is a real analytic function since the functions $d^2$ and $\exp^{-1}$ are both real analytic on $\ss$ (see Proposition \ref{propgeo} and \eqref{Expexp}). As a result, the functions $g \mapsto g_\mathrm{mid},\, g_\mathrm{mid}^{-1} \cdot g \mbox{ and } \det(\mathrm{Hess}(G_g)(g_\mathrm{mid}))(>0))$ are also real analytic on $\ss$.

Now for $g \in \exp(\mathcal{D} \cap (\R^q \times 4\Omega_*))$, by (ii) and (iv) of Proposition \ref{keyPro} we see that $\mathfrak{p}$ is also a real analytic function on $\exp(\mathcal{D} \cap (\R^q \times 4\Omega_*))$. So by the  uniqueness theorem for the real analytic functions (choosing connected component if necessary), we obtain  
\[
\mathfrak{p}(g) = C(\G) \jac(\exp)(\exp^{-1}(g))^{-\frac{1}{2}}
\]
on $\exp(\mathcal{D} \cap (\R^q \times 4\Omega_*))$. Hence Lemma \ref{sth} can be proved by repeating this argument.

\end{appendices}

\section*{Acknowledgements}
\setcounter{equation}{0}
The authors would like to thank Prof. G\'erard Ben Arous for informing us of Lemma \ref{sth}. They would like to thank Prof. Hong-Quan Li and the anonymous referees for their many useful suggestions and valuable remarks which improve the writing of the paper. The work of YZ was supported by JSPS KAKENHI Grant Number JP24K16928.


\mbox{}\\
Sheng-Chen Mao \\ 
School of Mathematical Sciences 
Fudan University \\
220 Handan Road  \\
Shanghai 200433  \\
People's Republic of China \\
E-Mail: scmao22@m.fudan.edu.cn \quad or \quad maosci@163.com \\

\mbox{}\\
Ye Zhang\\
Analysis on Metric Spaces Unit  \\
Okinawa Institute of Science and Technology Graduate University \\
1919-1 Tancha, Onna-son, Kunigami-gun \\
Okinawa, 904-0495, Japan \\
E-Mail: zhangye0217@gmail.com \quad or \quad Ye.Zhang2@oist.jp \mbox{}\\

\end{document}